\theoremstyle{plain}
\newtheorem{theorem}{Theorem}
\newtheorem{lemma}{Lemma}
\newcommand{\infint}{\int_{-\infty}^{\infty}}
\def\convd{\stackrel{\cal D}{\rightarrow}}
\def\ex{{\rm E\,}}
\def\var{\mathop{\rm Var}\nolimits}
\begin{document}
\title{Asymptotic normality of the deconvolution kernel density estimator under the vanishing error variance}
\author{Bert van Es\\
{\normalsize Korteweg-de Vries Institute for Mathematics}\\
{\normalsize Universiteit van Amsterdam}\\
{\normalsize Plantage Muidergracht 24}\\
{\normalsize 1018 TV Amsterdam}\\
{\normalsize The Netherlands}\\
{\normalsize a.j.vanes@uva.nl}\\
{}\\
{\normalsize Shota Gugushvili}\\
{\normalsize EURANDOM}\\
{\normalsize Technische Universiteit Eindhoven}\\
{\normalsize P.O.\ Box 513}\\
{\normalsize 5600 MB Eindhoven}\\
{\normalsize The Netherlands}\\
{\normalsize gugushvili@eurandom.tue.nl}\\
{\normalsize Phone: +31 40 2478113}\\
{\normalsize Fax: +31 40 2478190}} \maketitle
\begin{abstract}
Let $X_1,\ldots,X_n$ be i.i.d.\ observations, where
$X_i=Y_i+\sigma_n Z_i$ and the $Y$'s and $Z$'s are independent.
Assume that the $Y$'s are unobservable and that they have the
density $f$ and also that the $Z$'s have a known density $k.$
Furthermore, let $\sigma_n$ depend on $n$ and let
$\sigma_n\rightarrow 0$ as $n\rightarrow\infty.$ We consider the
deconvolution problem, i.e.\ the problem of estimation of the
density $f$ based on the sample $X_1,\ldots,X_n.$ A popular
estimator of $f$ in this setting is the deconvolution kernel
density estimator. We derive its asymptotic normality under two
different assumptions on the relation between the sequence
$\sigma_n$ and the sequence of bandwidths $h_n.$ We also consider several simulation examples which illustrate different types of asymptotics corresponding to the derived theoretical results and which show that there exist situations where models with $\sigma_n\rightarrow 0$ have to be preferred to the models with fixed $\sigma.$
\medskip\\
{\sl Keywords:} Asymptotic normality, deconvolution, Fourier inversion, kernel type density estimator.\\
{\sl AMS subject classification:} Primary 62G07; Secondary 62G20\\
{\sl Running title:} Deconvolution kernel density estimator
\end{abstract}
\newpage

% \doublespacing

\section{Introduction}

The classical deconvolution problem consists of estimation of the
density $f$ of a random variable $Y$ based on the i.i.d.\ copies
$Y_1,\ldots,Y_n$ of $Y,$ which are corrupted by an additive
measurement error. More precisely, let $X_1,\ldots,X_n$ be i.i.d.\
observations, where $X_i=Y_i+Z_i$ and the $Y$'s and $Z$'s are
independent. Assume that the $Y$'s are unobservable and that they
have the density $f$ and also that the $Z$'s have a known density
$k.$ Such a model of measurements contaminated by an additive
measurement error has numerous applications in practice and arises
in a variety of fields, see for instance \citet{carroll3}. Notice
that the $X$'s have a density $g$ which is equal to the
convolution of $f$ and $k.$ The deconvolution problem consists in
estimation of the density $f$ based on the sample
$X_1,\ldots,X_n.$

A popular estimator of $f$ is the deconvolution kernel density
estimator, which was proposed in \citet{carroll1} and
\citet{carroll2}, see also pp.\ 231--233 in \citet{wasserman} for
an introduction. Additional recent references can be found e.g.\
in \citet{gugu2}. Let $w$ be a kernel and $h_n>0$ a bandwidth. The
deconvolution kernel density estimator $f_{nh_n}$ is constructed
as
\begin{equation}
\label{fnh} f_{nh_n}(x)=\frac{1}{2\pi}\infint
e^{-itx}\frac{\phi_w(h_nt)\phi_{emp}(t)}{\phi_k(t)}dt=\frac{1}{n}\sum_{j=1}^n
\frac{1}{h_n}w_{h_n}\left(\frac{x-X_j}{h_n}\right),
\end{equation}
where $\phi_{emp}$ denotes the empirical characteristic function, i.e.\ $\phi_{emp}(t)=n^{-1}\sum_{j=1}^n \exp(itX_j),$
%\begin{equation*}
%\end{equation*}
$\phi_w$ and $\phi_k$ are Fourier transforms of functions $w$ and $k,$ respectively, and
\begin{equation*}
w_{h_n}(x)=\frac{1}{2\pi}\infint e^{-itx}\frac{\phi_w(t)}{\phi_k(t/h_n)}dt.
\end{equation*}
Depending on the rate of decay of the characteristic function
$\phi_k$ at plus and minus infinity, deconvolution problems are
usually divided into two groups, ordinary smooth deconvolution
problems and supersmooth deconvolution problems. In the first case
it is assumed that $\phi_k$ decays to zero at plus and minus
infinity algebraically (an example of such $k$ is the Laplace
density) and in the second case the decay is essentially
exponential (in this case $k$ can be e.g.\ a standard normal
density). In general, the faster $\phi_k$ decays at plus and minus
infinity (and consequently smoother the density $k$ is), the more
difficult the deconvolution problem becomes, see e.g.\
\citet{fan1}. The usual smoothness condition imposed on the target
density $f$ is that it belongs to the class ${\mathcal
C}_{\alpha,L}=\{f:|f^{(\ell)}(x)-f^{(\ell)}(x+t)|\leq
L|t|^{\alpha-\ell}\text{ for all }x\text{ and }t\},$ where
$\alpha>0,$ $\ell=\left\lfloor \alpha \right\rfloor$ (the integer
part of $\alpha$) and $L>0$ are known constants, cf.\
\citet{fan1}. Then, if $k$ is ordinary smooth of order $\beta$
(see e.g.\ Assumption C (ii) below for a definition), the optimal
rate of convergence for the estimator
$f_{nh_n}(x)$ with the mean square error used as the performance criterion is $n^{-\alpha/(2\alpha+2\beta+1)},$ while if $k$
is supersmooth of order $\lambda$ (see Assumption B (ii)), the
optimal rate of convergence is $(\log n)^{-\alpha/\lambda},$ see
\citet{fan1}. The latter convergence rate is rather slow and it
suggests that the deconvolution problem is not practically
feasible in the supersmooth case, since it seems samples of very
large size are required to obtain reasonable estimates. Hence at
first sight it appears that the nonparametric deconvolution with
e.g.\ the Gaussian error distribution (a popular choice in
practice) cannot lead to meaningful results for moderate sample
sizes and is practically irrelevant. However, it was demonstrated
by exact $\operatorname{MISE}$ (mean integrated square error)
computations in \citet{wand} that, despite the slow convergence
rate in the supersmooth case, the deconvolution kernel density
estimator performs well for reasonable sample sizes, if the noise
level measured by the noise-to-signal ratio
$\operatorname{NSR}=\var[Z](\var[Y])^{-1}100\%,$ cf.\
\citet{wand}, is not too high. Clearly, an `ideal case' in a
deconvolution problem would be that not only the sample size $n$
is large, but also that the error term variance is small. This
leads one to an idealised model $X=Y+\sigma_n Z,$ where now
$\var[Z]=1$ and $\sigma_n$ depends on $n$ and tends to zero as
$n\rightarrow \infty.$ The idea to consider $\sigma_n\rightarrow
0$ was already proposed in \citet{fan3} and was further developed
in \citet{delaigle0}. We refer to these works for additional
motivation. These papers deal mainly with the mean integrated
square error of the estimator of $f.$ Here we will study its
asymptotic normality. Asymptotic normality of the deconvolution
kernel density estimator in the deconvolution problem with fixed
error term variance was derived in \citet{fan2} and
\citet{vanes2,vanes1}. For a practical situation where $\sigma_n\rightarrow 0$ can arise, see e.g.\ Section 4.2 of \citet{delaigle0}, where an example of measurement of sucrase in intestinal tissues is considered and inference is drawn on the density of the sucrase content. Sucrase is a name of several enzymes that catalyse the hydrolisis of sucrose to fructose and glucose.

It trivially follows from \eqref{fnh} that the deconvolution
kernel density estimator for the model that we consider, i.e.\
$X_i=Y_i+\sigma_n Z_i$ with $\sigma_n\rightarrow 0$ as $n\rightarrow\infty,$ is defined as
\begin{equation}
\label{fnh2} f_{nh_n}(x)=\frac{1}{2\pi}\infint
e^{-itx}\frac{\phi_w(h_nt)\phi_{emp}(t)}{\phi_k(\sigma_n
t)}dt=\frac{1}{n}\sum_{j=1}^n \frac{1}{h_n}
w_{r_n}\left(\frac{x-X_j}{h_n}\right),
\end{equation}
where
\begin{equation}
\label{wh} w_{r_n}(x)=\frac{1}{2\pi}\infint
e^{-itx}\frac{\phi_w(t)}{\phi_k(r_nt)}dt,
\end{equation}
$r_n=\sigma_n/ h_n$ and $\phi_k$ now denotes the characteristic
function of the random variable $Z$ with a density $k.$ We will
also use $\rho_n=r_n^{-1}=h_n/\sigma_n$ and in this case we will
denote the function $w_{r_n}$ by $w_{\rho_n}.$ Observe that if $w$
is symmetric, \eqref{fnh2} will be real-valued.

To get a consistent estimator, we need to control the bandwidth
$h_n.$ The usual condition to get consistency in kernel density estimation is that the bandwidth $h_n$ depends on $n$ and is such that $h_n\rightarrow 0,nh_n\rightarrow\infty,$ see e.g.\ Theorem 6.27 in
\citet{wasserman}. Since in our model we assume $\sigma_n\rightarrow
0,$ additional assumptions on $h_n,$ which relate it to $\sigma_n,$
are needed. In essence we distinguish two cases:
$\sigma_n/ h_n\rightarrow r$ with $0\leq r<\infty,$ or
$\sigma_n/ h_n\rightarrow\infty.$ Conditions on the target density $f,$
the density $k$ of $Z$ and kernel $w$ will be tailored to these two
cases.

The remaining part of the paper is organised as follows: in
Section \ref{results} we will present the obtained results. Section \ref{simulations} contains several simulation examples illustrating the results from Section \ref{results}. All the proofs are given in Section \ref{proofs}.

\section{Results}
\label{results}

\subsection{The case $0\leq r<\infty$}

We first consider the case when $0\leq r<\infty.$ We will need the
following conditions on $f,$ $w,$ $k$ and $h_n.$

\bigskip

{\bf Assumption A.}

(i) The density $f$ is such that $\phi_f$ is integrable.

(ii) $\phi_k(t)\neq 0$ for all $t\in{\mathbb R}$ and $\phi_k$ has
a bounded derivative.

(iii) The kernel $w$ is symmetric, bounded and continuous.
Furthermore, $\phi_w$ has support $[-1,1],$ $\phi_w(0)=1,$ $\phi_w$ is differentiable and
$|\phi_w(t)|\leq 1.$

(iv) The bandwidth $h_n$ depends on $n$ and we have $h_n\rightarrow
0,nh_n\rightarrow\infty.$

(v) $\sigma_n\rightarrow 0$ and $r_n=\sigma_n/ h_n\rightarrow r,$ where
$0\leq r<\infty.$

\bigskip

Notice that Assumption A (i) implies that $f$ is continuous and bounded.
Assumption $\phi_k(t)\neq 0$ for all $t\in{\mathbb R}$ is standard in kernel deconvolution and is unavoidable when using the Fourier inversion approach to deconvolution.
Furthermore, a variety of kernels satisfy Assumption A (iii), see e.g.\ examples in \citet{vanes1}. Also notice that $w$ is not necessarily a density, since it may take on negative values. Observe that in Assumption A (v) we do not
exclude the case $r=0.$

The following theorem establishes asymptotic normality in this
case.
\begin{theorem}
\label{thman1} Let Assumption A hold and let the estimator
$f_{nh_n}$ be defined by \eqref{fnh2}. Then
\begin{equation}
\label{an0} \sqrt{nh_n}(f_{nh_n}(x)-\ex[f_{nh_n}(x)])\convd {\mathcal
N}\left(0\, , \, f(x)\negthinspace\infint |w_r(u)|^2du\right)
\end{equation}
as $n\rightarrow\infty.$
\end{theorem}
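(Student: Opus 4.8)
The plan is to write $f_{nh_n}(x) - \ex[f_{nh_n}(x)]$ as a normalized sum of i.i.d.\ random variables and apply a triangular-array central limit theorem, namely Lyapunov's CLT. Set
\begin{equation*}
V_{n,j} = \frac{1}{h_n} w_{r_n}\Bigl(\frac{x-X_j}{h_n}\Bigr),\qquad f_{nh_n}(x) = \frac{1}{n}\sum_{j=1}^n V_{n,j},
\end{equation*}
so that $\sqrt{nh_n}(f_{nh_n}(x)-\ex[f_{nh_n}(x)]) = (nh_n)^{-1/2}\sum_{j=1}^n h_n(V_{n,j}-\ex V_{n,j})$. Since the summands are i.i.d.\ for each fixed $n$, it suffices to (a) compute $\lim_{n\to\infty} h_n\var(V_{n,1})$ and show it equals $f(x)\infint|w_r(u)|^2\,du$, and (b) verify a Lyapunov condition, e.g.\ that $n^{-1}(nh_n)^{-\delta/2}\cdot h_n^{\,2+\delta}\,\ex|V_{n,1}-\ex V_{n,1}|^{2+\delta}\to 0$ for $\delta=1$ (third moments), equivalently $(nh_n)^{-1/2}h_n^{3}\,\ex|V_{n,1}-\ex V_{n,1}|^{3}\to 0$.

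For the variance, write $h_n\var(V_{n,1}) = h_n\ex[V_{n,1}^2] - h_n(\ex V_{n,1})^2$. The density of $X_1$ is $g_n = f * k_{\sigma_n}$ where $k_{\sigma_n}(\cdot) = \sigma_n^{-1}k(\cdot/\sigma_n)$; since $\sigma_n\to 0$, $g_n\to f$ appropriately, and in particular $g_n$ stays locally bounded near $x$ because $f$ is bounded and continuous (Assumption A(i)). A change of variables $u = (x-y)/h_n$ gives
\begin{equation*}
h_n\ex[V_{n,1}^2] = \frac{1}{h_n}\infint w_{r_n}\Bigl(\frac{x-y}{h_n}\Bigr)^2 g_n(y)\,dy = \infint w_{r_n}(u)^2\, g_n(x-h_n u)\,du.
\end{equation*}
The key analytic fact I will need is that $w_{r_n}\to w_r$ in $L^2$ (and is uniformly $L^2$-bounded): from \eqref{wh}, $\phi_{w_{r_n}}(t) = \phi_w(t)/\phi_k(r_n t)$ is supported on $[-1,1]$ by Assumption A(iii), $\phi_k$ is continuous and nonvanishing (Assumption A(ii)), and $r_n\to r$, so $\phi_{w_{r_n}}\to \phi_w(\cdot)/\phi_k(r\,\cdot) = \phi_{w_r}$ pointwise and boundedly on $[-1,1]$; Plancherel then yields $w_{r_n}\to w_r$ in $L^2$. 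Combining this with $g_n(x-h_nu)\to f(x)$ (using $h_n\to 0$, continuity and boundedness of $f$, and uniform boundedness of $g_n$) and a dominated-convergence / Cauchy–Schwarz argument gives $h_n\ex[V_{n,1}^2]\to f(x)\infint|w_r(u)|^2\,du$. The term $h_n(\ex V_{n,1})^2$ is $O(h_n)\to 0$ because $\ex V_{n,1} = \infint w_{r_n}(u)g_n(x-h_nu)\,du$ stays bounded (again by $L^1$/$L^2$ control of $w_{r_n}$ and boundedness of $g_n$; note $w_{r_n}$ is bounded since $\phi_{w_{r_n}}$ has compact support). Hence $h_n\var(V_{n,1})\to f(x)\infint|w_r(u)|^2\,du$.

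For the Lyapunov condition, I bound $\ex|V_{n,1}-\ex V_{n,1}|^{3}\leq C\,\ex|V_{n,1}|^3 + C|\ex V_{n,1}|^3$. Now $\ex|V_{n,1}|^3 = h_n^{-3}\infint |w_{r_n}((x-y)/h_n)|^3 g_n(y)\,dy = h_n^{-2}\infint|w_{r_n}(u)|^3 g_n(x-h_nu)\,du \leq C h_n^{-2}\|w_{r_n}\|_\infty \|w_{r_n}\|_2^2$, using $g_n$ bounded; since $\|w_{r_n}\|_\infty$ and $\|w_{r_n}\|_2$ are $O(1)$, this is $O(h_n^{-2})$. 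Therefore $(nh_n)^{-1/2}h_n^3\,\ex|V_{n,1}-\ex V_{n,1}|^3 = O((nh_n)^{-1/2}h_n^3\cdot h_n^{-2}) = O((nh_n)^{-1/2}h_n) = O(n^{-1/2}h_n^{1/2})\to 0$ under Assumption A(iv). Lyapunov's CLT then gives \eqref{an0}.

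The main obstacle I expect is the uniform $L^2$-control of $w_{r_n}$ and the passage $w_{r_n}\to w_r$ in $L^2$: one must rule out the denominator $\phi_k(r_n t)$ causing blow-up as $r_n\to r$. This is where Assumption A(ii) ($\phi_k$ nonvanishing everywhere, with bounded derivative) and the compact support of $\phi_w$ (Assumption A(iii)) do the work — on the compact set $[-1,1]$, $\phi_k(r_n t)$ is bounded away from $0$ uniformly in large $n$ since $r_n\to r$, giving a uniform bound on $|\phi_{w_{r_n}}|$ and hence, via Plancherel, on $\|w_{r_n}\|_2$; pointwise convergence of the integrand plus this uniform bound yields $L^2$-convergence by dominated convergence. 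The case $r=0$ is included with no change: then $\phi_k(r_n t)\to\phi_k(0)=1$ and $w_r = w$, recovering the classical kernel-estimator variance $f(x)\infint w(u)^2\,du$.
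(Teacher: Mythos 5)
Your skeleton is the same as the paper's (write $f_{nh_n}(x)$ as an average of the i.i.d.\ variables $V_{n,j}=h_n^{-1}w_{r_n}((x-X_j)/h_n)$, compute the order of $\var[V_{n,j}]$, check Lyapunov), but your variance computation takes a genuinely different route: where the paper derives the pointwise envelope $|w_{r_n}(u)|\leq\min(C_1,C_2/|u|)$ by integration by parts (this is where the bounded derivative of $\phi_k$ enters) and then feeds it into a Bochner-type lemma (Lemma \ref{fanlemma}), you work on the Fourier side, using that $\phi_{w_{r_n}}=\phi_w/\phi_k(r_n\cdot)$ is uniformly bounded on $[-1,1]$ and converges pointwise, so that Plancherel gives a uniform $L^2$ bound and $w_{r_n}\rightarrow w_r$ in $L^2$; combined with $\|g_n\|_\infty\leq\|f\|_\infty$ and the uniform convergence $g_n\rightarrow f$ (which does follow from integrability of $\phi_f$ by dominated convergence) this yields $h_n\ex[V_{n,1}^2]\rightarrow f(x)\infint|w_r(u)|^2du$ without any Bochner-type lemma. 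That part is sound, and so is the moment bound $\ex|V_{n,1}|^3\leq h_n^{-2}\|f\|_\infty\|w_{r_n}\|_\infty\|w_{r_n}\|_2^2=O(h_n^{-2})$; with $\var[V_{n,1}]\asymp h_n^{-1}$ the standard Lyapunov ratio is then $O((nh_n)^{-1/2})\rightarrow0$. (Your two displayed versions of the Lyapunov condition are mutually inconsistent and neither is the standard ratio, but this is bookkeeping: the orders you establish do verify the correct condition, which is the one the paper writes.)

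The one step that does not go through as you justify it is the claim that $\ex[V_{n,1}]=\infint w_{r_n}(u)g_n(x-h_nu)\,du$ stays bounded ``by $L^1$/$L^2$ control of $w_{r_n}$ and boundedness of $g_n$.'' A uniform $L^1$ bound on $w_{r_n}$ is not available under Assumption A: $\phi_{w_{r_n}}$ is only continuous with compact support, and the best pointwise decay extractable from A (ii)--(iii) (by the paper's integration by parts) is $O(1/|u|)$, which is square integrable but not integrable; $L^2$ control alone gives, via Cauchy--Schwarz against $u\mapsto g_n(x-h_nu)$, only $|\ex[V_{n,1}]|=O(h_n^{-1/2})$, hence $h_n(\ex[V_{n,1}])^2=O(1)$ rather than $o(1)$, so as written you have not shown that the centering term drops out of $h_n\var[V_{n,1}]$. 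The fix is exactly the paper's computation \eqref{an3}: by Fubini,
\begin{equation*}
\ex[V_{n,1}]=\frac{1}{2\pi}\infint e^{-isx}\phi_f(s)\phi_w(h_ns)\,ds,
\end{equation*}
which is bounded by $\frac{1}{2\pi}\infint|\phi_f(s)|\,ds$ under Assumption A (i) and (iii) (and in fact converges to $f(x)$). With that substitution your argument is complete and otherwise parallels, and in places streamlines, the paper's proof.
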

Notice that unlike the asymptotic normality theorem for the
deconvolution kernel density estimator in the supersmooth
deconvolution problem with fixed $\sigma,$ that was obtained in
\citet{vanes2,vanes1}, the asymptotic variance in \eqref{an0} now
depends on $f$. When $r_n=0$ for all $n,$ we recover the
asymptotic normality theorem for an ordinary kernel density
estimator, see \citet{parzen}.

\subsection{The case $r=\infty$}

We turn to the case $r=\infty.$ In this case we have to make the
distinction between the ordinary smooth and supersmooth
deconvolution problems. We first consider the supersmooth
case. We will need the following condition.

\bigskip

{\bf Assumption B.}

(i) The density $f$ is such that $\phi_f$ is integrable.

(ii) $\phi_k(t)\neq 0$ for all $t\in{\mathbb R}$ and
$\phi_k(t)\sim C|t|^{\lambda_0}\exp(-|t|^{\lambda}/\mu)$ for some
constants $\lambda>1,\mu>0$ and real constants $\lambda_0$ and
$C.$

(iii) $w$ is a bounded, symmetric and continuous function.
Furthermore, $\phi_w$ is supported on $[-1,1],$ $\phi_w(0)=1$ and
$|\phi_w(t)|\leq 1.$ Moreover,
\begin{equation*}
\phi_w(1-t)=At^{\alpha}+o(t^{\alpha})
\end{equation*}
as $t\downarrow 0,$ where $A\in\mathbb{R}$ and $\alpha\geq 0$ are some numbers.

(iv) The bandwidth $h_n$ depends on $n$ and we have $h_n\rightarrow
0,nh_n\rightarrow\infty.$

(v) $\sigma_n\rightarrow 0$ and $\sigma_n^{\lambda}/
h_n^{\lambda-1}\rightarrow\infty.$

\bigskip

Assumption B (i)-(iv) correspond to those in \citet{vanes1}. Assumption B (v) is stronger than $\sigma_n/h_n\rightarrow\infty,$ but
it is essential in the proof of Theorem \ref{thman3}. Denote
$\zeta(\rho_n)=\exp(1/(\mu \rho_n^{\lambda})).$ The following
theorem holds true.

\begin{theorem}
\label{thman3} Let Assumption B hold and let the estimator
$f_{nh_n}$ be defined by \eqref{fnh2}. Furthermore, assume that $\ex[Y^2_j]<\infty$ and $\ex[Z^2_j]<\infty.$ Then
\begin{equation}
\label{supersmooth}
\frac{\sqrt{n}\sigma_n}{\rho_n^{\lambda(1+\alpha)+\lambda_0-1}\zeta(\rho_n)}(f_{nh_n}(x)-\ex[f_{nh_n}(x)])\convd
{\mathcal N} \left(0,\frac{A^2}{2\pi^2C^2}\left(\frac{\mu}
{\lambda}\right)^{2+2\alpha}(\Gamma(\alpha+1))^2\right)
\end{equation}
as $n\rightarrow\infty.$
\end{theorem}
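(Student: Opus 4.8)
The plan is to write $f_{nh_n}(x)-\ex[f_{nh_n}(x)]$ as a normalised sum of i.i.d.\ random variables and apply the Lyapunov central limit theorem. Starting from the kernel representation in \eqref{fnh2}, set
\begin{equation*}
V_{n,j}=\frac{1}{h_n}w_{r_n}\Bigl(\frac{x-X_j}{h_n}\Bigr),
\end{equation*}
so that $f_{nh_n}(x)-\ex[f_{nh_n}(x)]=n^{-1}\sum_{j=1}^n(V_{n,j}-\ex V_{n,j})$. The needed ingredients are then: (a) the exact order of $\var[V_{n,1}]$, which must match the square of the normalising sequence $\rho_n^{\lambda(1+\alpha)+\lambda_0-1}\zeta(\rho_n)/\sigma_n$ divided by $n$ up to the stated constant; and (b) a Lyapunov-type bound, say on $\ex|V_{n,1}|^3$ (or on $\ex|V_{n,1}|^{2+\delta}$), showing that $n\,\ex|V_{n,1}-\ex V_{n,1}|^3/(n\var[V_{n,1}])^{3/2}\to 0$.

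The heart of the matter is the asymptotics of $w_{r_n}$. From \eqref{wh}, $w_{\rho_n}(x)=(2\pi)^{-1}\infint e^{-itx}\phi_w(t)/\phi_k(t/r_n)\,dt$, and since $\phi_w$ is supported on $[-1,1]$ while $\phi_k(t)\sim C|t|^{\lambda_0}e^{-|t|^{\lambda}/\mu}$, the integrand is largest near $t=\pm1$, where $\phi_w(1-t)=At^{\alpha}+o(t^{\alpha})$ and $\phi_k(\pm1/r_n)^{-1}\sim C^{-1}r_n^{-\lambda_0}\zeta(\rho_n)$ with a further expansion of $\phi_k$ near $\pm1/r_n$ supplying the extra powers of $\rho_n$. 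A Laplace-type (Watson's lemma) argument localised at the endpoints of $[-1,1]$ should give
\begin{equation*}
w_{\rho_n}(x)\sim \frac{A}{\pi C}\Bigl(\frac{\mu}{\lambda}\Bigr)^{1+\alpha}\Gamma(\alpha+1)\,\rho_n^{\lambda(1+\alpha)+\lambda_0-1}\zeta(\rho_n)\cos(x+\text{phase}),
\end{equation*}
uniformly on compact $x$-sets, up to a slowly varying correction; this is the point where Assumption B (ii)–(iii) and condition (v) are all used. Then $\var[V_{n,1}]=h_n^{-2}\ex[w_{r_n}((x-X_1)/h_n)^2]-(\ex V_{n,1})^2$; writing the first term via Parseval as $(2\pi h_n)^{-1}\infint |\phi_w(h_nt)|^2 |\phi_k(\sigma_n t)|^{-2} g(\cdot)$-type convolution, or more directly by substitution $u=(x-y)/h_n$ and using $g\to f$-continuity together with the squared-$w_{\rho_n}$ asymptotics, yields $\var[V_{n,1}]\sim h_n^{-1} f(x)\infint w_{\rho_n}(u)^2\,du$, and the integral of the squared oscillatory asymptotic over one ``period'' produces the factor $\tfrac12$ and the constant $A^2/(2\pi^2C^2)(\mu/\lambda)^{2+2\alpha}(\Gamma(\alpha+1))^2$. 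The $\ex Y^2<\infty$, $\ex Z^2<\infty$ hypotheses enter to control the tails of $g=f*k_{\sigma_n}$ so that these integrals localise correctly and the $(\ex V_{n,1})^2$ term is negligible.

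For the Lyapunov step, $\ex|V_{n,1}|^3\le h_n^{-3}\|w_{r_n}\|_\infty^2\,\ex|w_{r_n}((x-X_1)/h_n)|\le C\,h_n^{-2}\|w_{r_n}\|_\infty^2 \sup g$, and since $\|w_{r_n}\|_\infty$ is of the same order as $w_{\rho_n}(x)$ above, one checks $n\,\ex|V_{n,1}|^3/(n\var[V_{n,1}])^{3/2}\asymp (n h_n)^{-1/2}\to 0$ by Assumption B (iv). I expect the main obstacle to be making the endpoint Laplace expansion of $w_{\rho_n}$ rigorous and uniform — in particular handling the slowly varying factor $|t|^{\lambda_0}$ in $\phi_k$, the $o(t^\alpha)$ remainder in $\phi_w$, and the lower-order contribution from the interior of $[-1,1]$ — and then transferring that pointwise-in-$x$ asymptotic into the integral $\infint w_{\rho_n}(u)^2\,du$ against the (shrinking-scale) density $g$, which requires a dominated-convergence or truncation argument using the moment assumptions. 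Everything else is the standard i.i.d.\ triangular-array CLT bookkeeping already used in \citet{vanes1}.
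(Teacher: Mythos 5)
Your overall plan---reduce to a triangular-array CLT for $V_{nj}=h_n^{-1}w_{r_n}((x-X_j)/h_n)$ and extract the constants from an endpoint (Laplace/Watson) expansion of $w_{\rho_n}$ near $t=\pm1$---is the right place to look for the normalising sequence, but the key step of your variance identification is wrong in this regime. You claim $\var[V_{n,1}]\sim h_n^{-1}f(x)\infint |w_{\rho_n}(u)|^2du$ with the factor $\tfrac12$ coming from averaging the oscillation ``over one period''. This cannot work: by your own endpoint expansion, $w_{\rho_n}(u)$ behaves like a huge amplitude $M_n=\frac{A}{\pi C}(\mu/\lambda)^{1+\alpha}\Gamma(\alpha+1)\rho_n^{\lambda(1+\alpha)+\lambda_0-1}\zeta(\rho_n)$ times a \emph{non-decaying} oscillation $\cos u$, so $\infint |w_{\rho_n}(u)|^2du$ diverges and the Parzen/Bochner localisation that produces an $f(x)$ factor (the mechanism behind Theorem \ref{thman1}) breaks down. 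Indeed, the limit variance in \eqref{supersmooth} does not contain $f(x)$ at all, so any argument that yields a variance proportional to $f(x)$ is proving the wrong statement. The correct computation is $\ex[V_{nj}^2]\approx h_n^{-2}M_n^2\,\ex[\cos^2((x-X_j)/h_n)]$ with $\ex[\cos^2((x-X_j)/h_n)]\to\tfrac12$, i.e.\ no change of variables, no extra $h_n$, and no $f(x)$; the $\tfrac12$ comes from an equidistribution argument, not from a density evaluated at $x$.

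This is exactly how the paper proceeds: after splitting off the Fourier range $|s|\le\epsilon$ (negligible) and replacing $1/\phi_k(s/\rho_n)$ by its asymptotic form, it writes $\cos(s(X_j-x)/h_n)=\cos((X_j-x)/h_n)+R_{n,j}(s)$ with $|R_{n,j}(s)|\le(|x|+|X_j|)(1-s)/h_n$, pulls the deterministic endpoint integral \eqref{asnrm2} out, and reduces everything to $n^{-1/2}\sum_j\bigl(\cos((X_j-x)/h_n)-\ex[\cos((X_j-x)/h_n)]\bigr)\convd\mathcal N(0,\tfrac12)$, proved via convergence of $(Y_j-x)/h_n\bmod 2\pi$ and $Z_j/\rho_n\bmod 2\pi$ to the uniform law on $[0,2\pi]$ (Lemma 6 of \citet{vanes1}). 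Two of your side remarks are also off for this reason: the hypotheses $\ex[Y_j^2]<\infty$, $\ex[Z_j^2]<\infty$ are not there to control tails of $g_n$ in a localisation step, but to bound $\var[\tilde R_{n,j}]$ coming from the remainder $R_{n,j}(s)$; and the strengthened condition B (v), $\sigma_n^\lambda/h_n^{\lambda-1}\to\infty$, is precisely what makes that remainder, of relative order $\rho_n^\lambda(\sigma_n h_n)^{-1}$ after normalisation, vanish---your proposal never uses B (v), which is a sign the decisive difficulty has been bypassed. So the missing idea is the reduction of the stochastic term to the global oscillatory statistic $\sum_j\cos((X_j-x)/h_n)$ and its equidistribution-mod-$2\pi$ CLT, in place of a pointwise $f(x)\int w^2$ variance formula.
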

When $\sigma_n=1$ for all $n,$ the arguments given in the proof of this theorem are still valid, and hence we can also recover
the asymptotic normality theorem of \citet{vanes1} for the deconvolution kernel density estimator in the supersmooth deconvolution problem.

Finally, we consider the ordinary smooth case.

\bigskip

{\bf Assumption C.}

(i) The density $f$ is such that $\phi_f$ is integrable.

(ii) $\phi_k(t)\neq 0$ for all $t\in{\mathbb R}$ and
$\phi_k(t)t^{\beta}\rightarrow
C,\phi_k^{\prime}(t)t^{\beta+1}\rightarrow-\beta C$ as
$t\rightarrow\infty,$ where $\beta\geq 0$ and $C\neq 0$ are some
constants.

(iii) $\phi_w$ is symmetric and continuously differentiable.
Furthermore, $\phi_w$ is supported on $[-1,1],$ $|\phi_w(t)|\leq
1$ and $\phi_w(0)=1.$

(iv) The bandwidth $h_n$ depends on $n$ and we have $h_n\rightarrow
0,nh_n\rightarrow\infty.$

(v) $\sigma_n\rightarrow 0$ and $\sigma_n/ h_n\rightarrow\infty.$

\bigskip

For the discussion on Assumption C (i)--(iv) see \citet{fan2}.

\begin{theorem}
\label{thman2} Let Assumption C hold and let the estimator
$f_{nh_n}$ be defined by \eqref{fnh2}. Then
\begin{equation}
\label{ordinarysmooth}
\sqrt{nh_n\rho_n^{2\beta}}(f_{nh_n}(x)-\ex[f_{nh_n}(x)])\convd {\mathcal
N} \left(0,\frac{f(x)}{{2\pi C^2}}\infint
|t|^{2\beta}|\phi_w(t)|^2dt\right)
\end{equation}
as $n\rightarrow\infty.$
\end{theorem}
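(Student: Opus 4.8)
The plan is to write the left-hand side of \eqref{ordinarysmooth} as a normalised sum of i.i.d.\ centred random variables and to apply a triangular-array central limit theorem of Lindeberg--Feller type. Put $Y_{n,j}=h_n^{-1}w_{r_n}((x-X_j)/h_n)$, so that by \eqref{fnh2} one has $f_{nh_n}(x)-\ex[f_{nh_n}(x)]=n^{-1}\sum_{j=1}^n(Y_{n,j}-\ex Y_{n,j})$, and set $\xi_{n,j}=\sqrt{h_n\rho_n^{2\beta}/n}\,(Y_{n,j}-\ex Y_{n,j})$; then the quantity in \eqref{ordinarysmooth} equals $\sum_{j=1}^n\xi_{n,j}$, a row-sum of i.i.d.\ mean-zero variables. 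It therefore suffices to establish (a) $\sum_{j=1}^n\var[\xi_{n,j}]=h_n\rho_n^{2\beta}\var[Y_{n,1}]\to f(x)(2\pi C^2)^{-1}\infint|t|^{2\beta}|\phi_w(t)|^2\,dt$ and (b) the Lindeberg condition $\sum_{j=1}^n\ex[\xi_{n,j}^2\mathbf{1}\{|\xi_{n,j}|>\varepsilon\}]\to0$ for every $\varepsilon>0$.

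For (a), first note that $\ex[Y_{n,1}]=\ex[f_{nh_n}(x)]=\frac{1}{2\pi}\infint e^{-itx}\phi_w(h_nt)\phi_f(t)\,dt$ (the factor $\phi_k(\sigma_nt)$ in \eqref{fnh2} cancelling against $\ex[\phi_{emp}(t)]=\phi_f(t)\phi_k(\sigma_nt)$), which is bounded by $\frac{1}{2\pi}\|\phi_f\|_1$ thanks to Assumption C (i), (iii); since $h_n\to0$ and $\rho_n=h_n/\sigma_n\to0$ we have $h_n\rho_n^{2\beta}\to0$, so the contribution of $h_n\rho_n^{2\beta}(\ex Y_{n,1})^2$ is negligible and it remains to handle $h_n\rho_n^{2\beta}\ex[Y_{n,1}^2]$. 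Writing $g=f*k_{\sigma_n}$ for the density of $X_1$ and substituting $u=(x-y)/h_n$ gives $h_n\rho_n^{2\beta}\ex[Y_{n,1}^2]=\infint(\rho_n^\beta w_{r_n}(u))^2\,g(x-h_nu)\,du$. The heart of the argument is the asymptotics of $w_{r_n}$: from Assumption C (ii), $\phi_k$ is continuous and non-vanishing and $|\phi_k(s)||s|^\beta\to|C|$ as $|s|\to\infty$, whence a uniform lower bound $|\phi_k(s)|\ge A_0^{-1}(1+|s|)^{-\beta}$ for all $s$ and some $A_0>0$; consequently $\rho_n^\beta/|\phi_k(r_nt)|\le A_0(\rho_n+|t|)^\beta$ is, for $\rho_n\le1$, bounded by an $L^2$ function of $t$ on $[-1,1]$. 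Dominated convergence applied in \eqref{wh} then yields $\rho_n^\beta w_{r_n}(u)\to v(u):=\frac{1}{2\pi C}\infint e^{-itu}|t|^\beta\phi_w(t)\,dt$ for every $u$, and the same domination applied in the Fourier domain, together with Parseval's identity, gives $\|\rho_n^\beta w_{r_n}-v\|_{L^2}\to0$ and a uniform $L^2$-bound on $\rho_n^\beta w_{r_n}$. Splitting $\infint(\rho_n^\beta w_{r_n})^2g(x-h_nu)\,du$ as $\infint((\rho_n^\beta w_{r_n})^2-v^2)g(x-h_nu)\,du+\infint v^2(u)g(x-h_nu)\,du$, the first integral is $O(\|g\|_\infty\,\|\rho_n^\beta w_{r_n}-v\|_{L^2}\,\|\rho_n^\beta w_{r_n}+v\|_{L^2})\to0$ by Cauchy--Schwarz, while the second tends to $g(x)\infint v^2(u)\,du$ by dominated convergence, using that $g$ is bounded and continuous with $g(x)\to f(x)$ as $\sigma_n\to0$. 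Hence $h_n\rho_n^{2\beta}\ex[Y_{n,1}^2]\to f(x)\infint v^2(u)\,du$, and Parseval identifies $\infint v^2(u)\,du=(2\pi C^2)^{-1}\infint|t|^{2\beta}|\phi_w(t)|^2\,dt$, which is (a).

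For (b), the same lower bound on $|\phi_k|$ gives $\|w_{r_n}\|_\infty\le\frac{1}{2\pi}\int_{-1}^1|\phi_k(r_nt)|^{-1}\,dt\le C_1 r_n^\beta=C_1\rho_n^{-\beta}$ for a constant $C_1$ and all large $n$, hence $|Y_{n,j}-\ex Y_{n,j}|\le2\|w_{r_n}\|_\infty/h_n$ and $|\xi_{n,j}|\le2C_1/\sqrt{nh_n}\to0$ uniformly in $j$. Since the $\xi_{n,j}$ are thus uniformly bounded by a null sequence while $\sum_{j=1}^n\var[\xi_{n,j}]$ converges to a positive limit, the Lindeberg sums are identically zero for all large $n$, and the Lindeberg--Feller central limit theorem yields the convergence asserted in \eqref{ordinarysmooth}.

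The main obstacle is step (a): one must convert the pointwise asymptotics of $\phi_k$ in Assumption C (ii) into the $L^2$ convergence $\rho_n^\beta w_{r_n}\to v$, the delicate point being the uniform-in-$n$ integrable (in fact $L^2$) domination of the integrand in the Fourier inversion \eqref{wh} that legitimises dominated convergence; once this is in place, the variance limit and the Lindeberg step are routine.
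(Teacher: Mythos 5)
Your argument is correct, and it reaches \eqref{ordinarysmooth} by a genuinely different technical route than the paper. The paper works with $V_{nj}=h_n^{-1}w_{\rho_n}((x-X_j)/h_n)$, imports from \citet{fan2} the two bounds $|\rho_n^{\beta}w_{\rho_n}(y)|\leq C_2$ (its eq.\ (3.1)) and $|\rho_n^{\beta}w_{\rho_n}(y)|\leq C_1/|y|$ (its eq.\ (2.7), which is precisely where the derivative condition $\phi_k^{\prime}(t)t^{\beta+1}\rightarrow-\beta C$ of Assumption C (ii) is used), feeds the envelope $\min(C_1/|y|,C_2)$ into the modified Bochner lemma (Lemma \ref{fanlemma}) to obtain the variance asymptotics \eqref{an13}, and then verifies Lyapunov's condition through a $(2+\delta)$-moment bound. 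You instead extract from continuity, non-vanishing and the tail behaviour of $\phi_k$ the explicit lower bound $|\phi_k(s)|\geq A_0^{-1}(1+|s|)^{-\beta}$, deduce the $L^2$-convergence $\rho_n^{\beta}w_{r_n}\rightarrow v$ by Parseval and dominated convergence on the Fourier side, and get the variance limit by Cauchy--Schwarz against the uniformly bounded densities $g_n$ plus dominated convergence; for the CLT you verify Lindeberg directly from the deterministic bound $\|w_{r_n}\|_\infty=O(\rho_n^{-\beta})$, which makes every summand of size $O(1/\sqrt{nh_n})$ so that the Lindeberg sums vanish identically for large $n$. Your route is more self-contained (no appeal to Fan's inequalities nor to Lemma \ref{fanlemma}) and in fact never uses the derivative condition on $\phi_k$, so it proves the theorem under a slightly weaker form of Assumption C (ii); the paper's route buys brevity by recycling machinery (Lemma \ref{fanlemma}, the verification of \eqref{suplim}, the Lyapunov computation) already set up for Theorem \ref{thman1}. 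One step you should state more carefully: in $\infint v^2(u)g_n(x-h_nu)\,du\rightarrow f(x)\infint v^2(u)\,du$ the density $g_n$ changes with $n$, so boundedness and continuity of each individual $g_n$ together with $g_n(x)\rightarrow f(x)$ are not quite enough as written; what you need is $g_n(x-h_nu)\rightarrow f(x)$ for each fixed $u$ and a bound on $\|g_n\|_\infty$ uniform in $n$, both of which follow from Assumption C (i) exactly as in the paper's verification of \eqref{suplim}, namely $\sup_y|g_n(y)-f(y)|\leq(2\pi)^{-1}\infint|\phi_f(t)|\,|\phi_k(\sigma_nt)-1|\,dt\rightarrow0$ together with the continuity of $f$ and $\|g_n\|_\infty\leq(2\pi)^{-1}\infint|\phi_f(t)|\,dt$.
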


When $\sigma_n=1,$ we recover the asymptotic normality theorem of
\citet{fan2} for a deconvolution kernel density estimator in the
ordinary smooth deconvolution problem.

As a general conclusion, we notice that Theorems
\ref{thman1}--\ref{thman2} demonstrate that the asymptotics of
$f_{nh_n}(x)$ depend in an essential way on the relationship
between the sequences $\sigma_n$ and $h_n.$ In case
$r_n\rightarrow r<\infty,$ the asymptotics are similar to those in
the direct density estimation, while when $r=\infty,$ they
resemble those in the classical deconvolution problem.

\section{Simulation examples}
\label{simulations}

In this section we consider several simulation examples for the
supersmooth deconvolution case covered by Theorems \ref{thman1}
and \ref{thman3}. We do not pretend to produce an exhaustive
simulation study. Our examples serve as a mere illustration of the
asymptotic results from the previous section.

It follows from Theorems \ref{thman1}--\ref{thman2} that for a
fixed point $x$ and a large enough $n,$ a suitably centred and
normalised estimator $f_{nh_n}(x)$ is approximately normally
distributed with mean and standard deviation given in these three
theorems. Suppose we have fixed the sample size $n$ and the
bandwidth $h_n,$ generated a sample of size $n,$ evaluated the
estimate $f_{nh_n}(x)$ and have repeated this procedure $N$ times,
where $N$ is sufficiently large. This will give us $N$ values of
$f_{nh_n}(x).$ We then can evaluate the sample mean and the sample
standard deviation of this set of values $f_{nh_n}(x).$ Under
appropriate conditions these should be close to the ones predicted
by Theorems \ref{thman1} and \ref{thman3}. In particular, in the
setting of Theorem \ref{thman1}, the mean $M$ and the standard
deviation $SD$ must be approximately given by
\begin{equation}
\label{msdthm1}
M=f\ast w_{h_n}(x), \quad SD=\frac{1}{\sqrt{nh_n}}f(x)\infint |w_{\sigma_n/h_n}(u)|^2du,
\end{equation}
while in the setting of Theorem \ref{thman3} they are approximately equal to
\begin{equation}
\label{msdthm3}
M=f\ast w_{h_n}(x), \quad SD=
\frac{A}{\sqrt{2}\pi C}\left(\frac{\mu}{\lambda}\right)^{1+\alpha}\Gamma(\alpha+1)\frac{\rho_n^{\lambda(1+\alpha)+\lambda_0-1}\zeta(\rho_n)}{\sqrt{n}\sigma_n}.
\end{equation}

We first concentrate on Theorem \ref{thman1}. Let $f$ and $k$ be
standard normal densities, let $n=1000$ and suppose
$\sigma_n=0.1.$ The noise level measured by the noise-to-signal
ratio is thus rather low and equals $\operatorname{NSR}=1\%.$
Suppose that a kernel $w$ is given by
\begin{equation}
\label{fankernel}
w(x)=\frac{48\cos x}{\pi x^4}\left(1-\frac{15}{x^2}\right)-\frac{144\sin x}{\pi x^5}\left(2-\frac{5}{x^2}\right).
\end{equation}
Its corresponding Fourier transform is given by
$\phi_{w}(t)=(1-t^2)^3 1_{[-1,1]}(t).$ Here $A=8$ and $\alpha=3.$
A good performance of this kernel in deconvolution context was
established in \citet{delaigle1}. Assume that the number of
replications $N=500.$ Before we proceed any further, we need to
fix the bandwidth. We opted for a theoretically optimal bandwidth,
i.e.\ the bandwidth that minimises
\begin{equation}
\label{mise}
\operatorname{MISE}[f_{nh_n}]=\ex \left[\infint (f_{nh_n}(x)-f(x))^2dx \right],
\end{equation}
the mean-squared error of the estimator $f_{nh}.$ To find this
optimal bandwidth, we considered a sequence of bandwidths
$h=0.01*k,k=1,2,\ldots,K,$ where $K$ is a large enough integer,
passed to the Fourier transforms in \eqref{mise} via Parseval's
identity, cf.\ \citet{wand}, and then used the numerical
integration. This procedure resulted in $h_n=0.1.$ For real data
the above method does not work, because \eqref{mise} depends on
the unknown $f,$ and we refer to \citet{delaigle0} for
data-dependent bandwidth selection methods. However, once again we
stress the fact that in order to reach a specific goal of these
simulation examples, the bandwidth $h_n$ must be the same for all
$N$ replications. This excludes the use of a data-dependent
procedure. To speed up the computation of the estimates, binning of observations was used, see e.g.\ \citet{silverman} and \citet{lotwick} for related ideas in kernel density estimation.

Under these assumptions we evaluated the sample means and standard
deviations of $f_{nh_n}(x)$ for $x$ from a grid on the interval
$[-3,3]$ with mesh size $\Delta=0.1.$ These then were plotted in
Figure \ref{thm1fig1msd} together with the theoretical values from
\eqref{msdthm1}. We notice that the sample means match the
theoretical values very well. This can be also explained by the fact that the bandwidth $h_n$ is quite small. The match between the sample
standard deviations and the theoretical standard deviations is
slightly less satisfactory. It also turns out that Theorem
\ref{thman3} is clearly not applicable in this case: an evaluation
of the theoretical standard deviation $SD$ in \eqref{msdthm3}
yields a very large value $3.41646,$
which grossly overestimates the sample standard deviation for any
point $x.$ The reason for this seems to be that both the sample size $n$
and the error variance $\sigma_n^2$ appear to be too small for the
setting of Theorem \ref{thman3}.
\begin{figure}[htb]
\setlength{\unitlength}{1cm}
\begin{minipage}{5.5cm}
\begin{picture}(5.5,4.0)
\epsfxsize=5.5cm\epsfysize=4cm\epsfbox{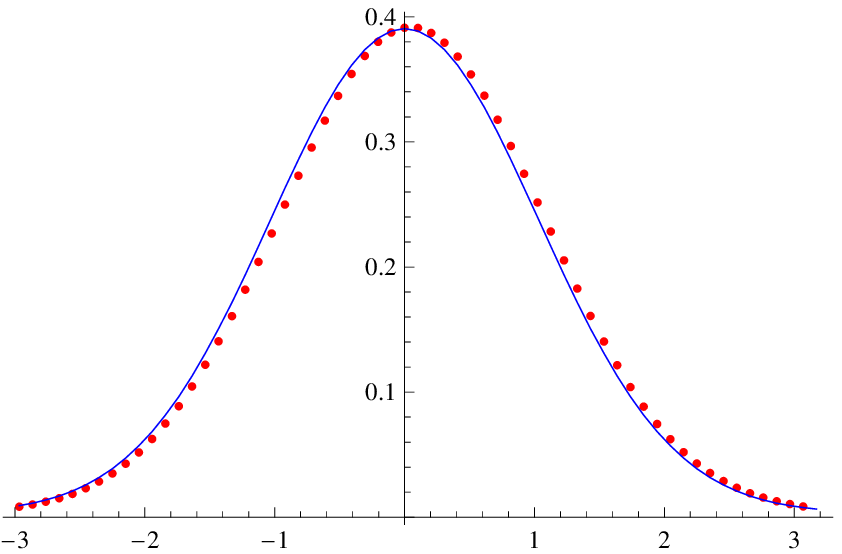}
\end{picture}
\end{minipage}
\hfill
\setlength{\unitlength}{1cm}
\begin{minipage}{5.5cm}
\begin{picture}(5.5,4.0)
\epsfxsize=5.5cm\epsfysize=4cm\epsfbox{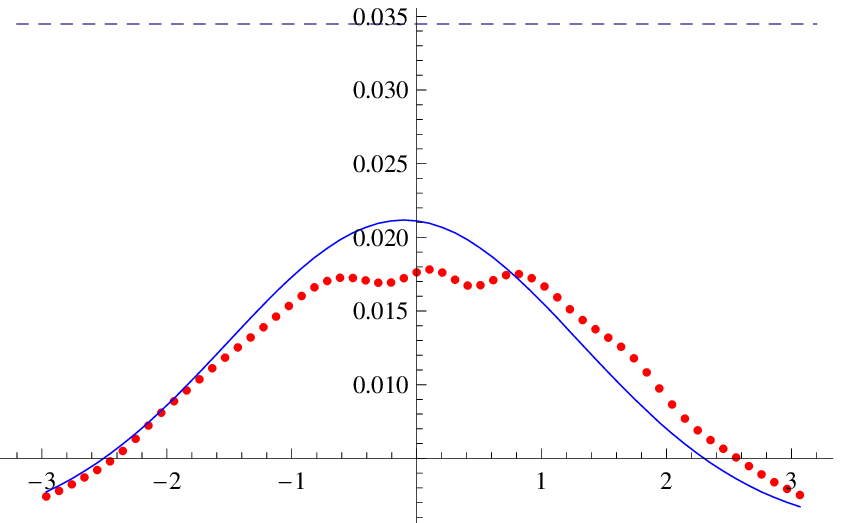}
\end{picture}
\end{minipage}
\caption{\label{thm1fig1msd} \small{The sample means and the theoretical means (left display, a dotted and a solid line, respectively) together with the sample standard deviations and the
two theoretical standard deviations corresponding to Theorems
\ref{thman1} and \ref{thman3} (right display, a dotted, a solid
and a dashed line, respectively). Here the target density $f$ and the density $k$ of a random variable $Z$ are standard normal densities, the noise variance $\sigma_n^2=0.01,$ the sample size $n=1000,$ the bandwidth $h_n=0.1$ and the kernel $w$ is given by \eqref{fankernel}. The number of replications equals $N=500.$ The integral in \eqref{asnrm2} and not its asymptotic expansion
was used to evaluate the standard deviation in Theorem
\ref{thman3}.}}
\end{figure}

At this point the following remark is in order. Reviewing the proof of Theorem \ref{thman3}, one sees that the following asymptotic equivalence is used:
\begin{equation}
\label{asnrm2}
\int_{0}^1 \phi_w(s) \exp[s^{\lambda}/(\mu h^{\lambda})]ds \sim A \Gamma (\alpha+1) \left(\frac{\mu}{\lambda}h^{\lambda}\right)^{1+\alpha} e^{1/(\mu h^{\lambda})}
\end{equation}
as $h\rightarrow0.$ This explains the shape of the normalising
constant in Theorem \ref{thman3}. However, the direct numerical
evaluation of the integral in \eqref{asnrm2} (with the same
parameters and the kernel as in our example above) shows that the
approximation in \eqref{asnrm2} is good only for very small values
of $h$ and that it is quite inaccurate for larger values of
$h,$ see a discussion in \citet{gugu1}. Obviously, one can
correct for the poor approximation of the sample standard
deviation by the theoretical standard deviation by using the
left-hand side of \eqref{asnrm2} instead of its approximation.
Nevertheless, this still leads to a very large (compared to the sample standard deviation) value of the
theoretical standard deviation for our particular example, namely $0.034477.$

In our second example we left $\sigma_n,n$ and $k$ the same as above, but as $f$ we took a mixture of two normal densities with means $-1$ and $1$ and equal variance $0.375.$ The mixing probability was taken to be equal to $0.5.$ The density $f$ is bimodal and is plotted in Figure \ref{bimodalfig}.
\begin{figure}[htb]
\centering
\includegraphics{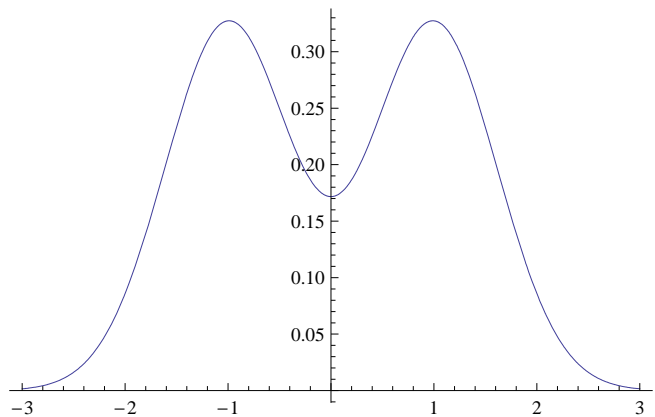}
\caption{\small{The density $f$: a mixture of two normal densities with means $-1$ and $1$ and equal variance $0.375.$ The mixing probability is taken to be equal to $0.5.$}}
\label{bimodalfig}
\end{figure}
The simulation results for this density are reported in Figure \ref{add4}. The conclusions are the same as for the first example. One can easily recognise a bimodal shape of the target density $f$ by looking at the sample standard deviation.
\begin{figure}[htb]
\setlength{\unitlength}{1cm}
\begin{minipage}{5.5cm}
\begin{picture}(5.5,4.0)
\epsfxsize=5.5cm\epsfysize=4cm\epsfbox{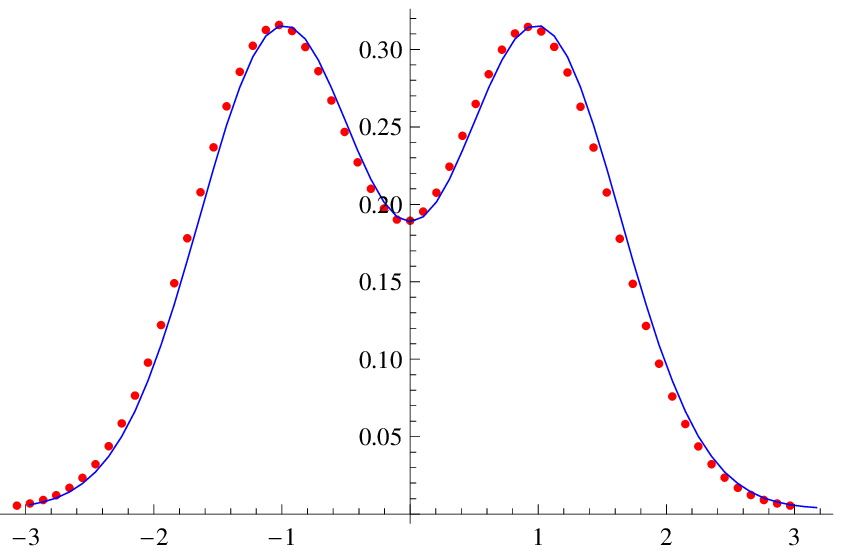}
\end{picture}
\end{minipage}
\hfill
\setlength{\unitlength}{1cm}
\begin{minipage}{5.5cm}
\begin{picture}(5.5,4.0)
\epsfxsize=5.5cm\epsfysize=4cm\epsfbox{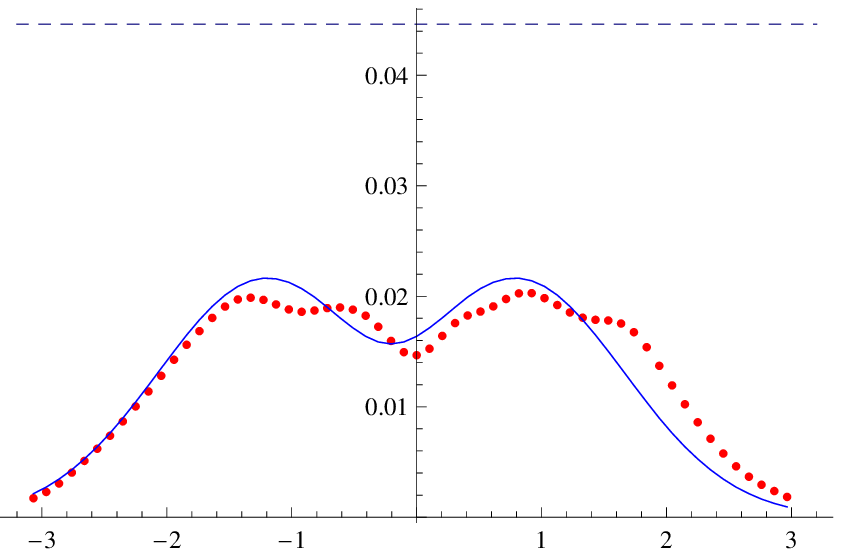}
\end{picture}
\end{minipage}
\caption{\label{add4} \small{The sample means and the
theoretical means (left display, a dotted and a solid line,
respectively) together with the sample standard deviations and the
two theoretical standard deviations corresponding to Theorems
\ref{thman1} and \ref{thman3} (right display, a dotted, a solid
and a dashed line, respectively). Here the target density $f$ is a
mixture of two normal densities with means equal to $-1$ and $1$ and the same variance $0.375,$ the mixing probability is $0.5,$ the density $k$ of a random variable $Z$ is a standard normal density, the noise variance $\sigma_n^2=0.01,$ the
sample size $n=1000,$ the bandwidth $h_n=0.08$ and the kernel
$w$ is given by \eqref{fankernel}. The number of replications
equals $N=500.$ The integral in \eqref{asnrm2} and not its
asymptotic expansion was used to evaluate the standard deviation
in Theorem \ref{thman3}.}}
\end{figure}

In our third example we again considered the standard normal density, but we increased the sample size to $n=10000.$ The results are reported in Figure \ref{thm1fig2msd}. As can be seen, the match between the sample standard deviations and the theoretical standard deviations as computed using Theorem \ref{thman1} is less satisfactory than in the previous example. The explanation lies in the fact that, even though the noise level is low when judged by itself, it is still a bit large compared to the sample size that we have in this case. Also Theorem \ref{thman3} remains unapplicable, as it still produces considerably larger values of the theoretical standard deviation compared to the sample standard deviation ($0.0166319$ after the necessary correction using \eqref{asnrm2}).
\begin{figure}[htb]
\setlength{\unitlength}{1cm}
\begin{minipage}{5.5cm}
\begin{picture}(5.5,4.0)
\epsfxsize=5.5cm\epsfysize=4cm\epsfbox{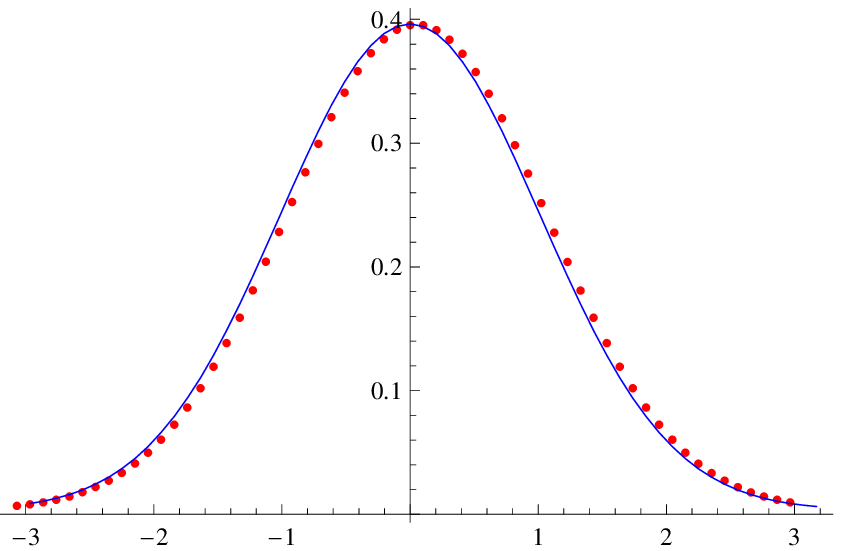}
\end{picture}
\end{minipage}
\hfill
\setlength{\unitlength}{1cm}
\begin{minipage}{5.5cm}
\begin{picture}(5.5,4.0)
\epsfxsize=5.5cm\epsfysize=4cm\epsfbox{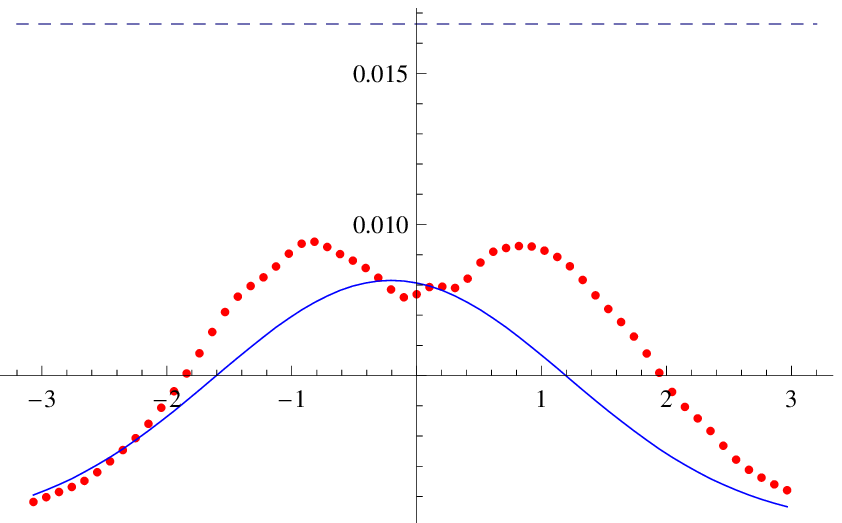}
\end{picture}
\end{minipage}
\caption{\label{thm1fig2msd} \small{The sample means and the theoretical means (left display, a dotted and a solid line, respectively) together with the sample standard deviations and the
two theoretical standard deviations corresponding to Theorems
\ref{thman1} and \ref{thman3} (right display, a dotted, a solid
and a dashed line, respectively). Here the target density $f$ and the density $k$ of a random variable $Z$ are standard normal densities, the noise variance $\sigma_n^2=0.01,$ the sample size $n=10000,$ the bandwidth $h_n=0.07$ and the kernel $w$ is given by \eqref{fankernel}. The number of replications equals $N=500.$ The integral in \eqref{asnrm2} and not its asymptotic expansion
was used to evaluate the standard deviation in Theorem
\ref{thman3}.}}
\end{figure}

In the next three examples we kept the standard normal densities $f$ and $k,$ but increased the sample size $n$ to $100000.$ The error variance $\sigma_n^2$ was consecutively taken to be $0.01,1$ and $4,$ i.e. we considered three different noise levels, $1\%,100\%$ and $400\%.$ A transition from the asymptotics described by Theorem \ref{thman1} to those described by Theorem \ref{thman3} is clearly visible in the resulting plots, see Figures \ref{add1}--\ref{add3}. Figure \ref{add1} also indicates that there exist intermediate situations not immediately covered by either of the two theorems. Notice that Figure \ref{add3} seems to confirm a general, albeit not intuitive message of Theorem \ref{thman3}, which says that the asymptotic standard deviation does not depend on a point $x,$ but only on the error density $k:$ there is a large neighbourhood around zero for which the sample standard deviation is almost constant.
\begin{figure}[htb]
\setlength{\unitlength}{1cm}
\begin{minipage}{5.5cm}
\begin{picture}(5.5,4.0)
\epsfxsize=5.5cm\epsfysize=4cm\epsfbox{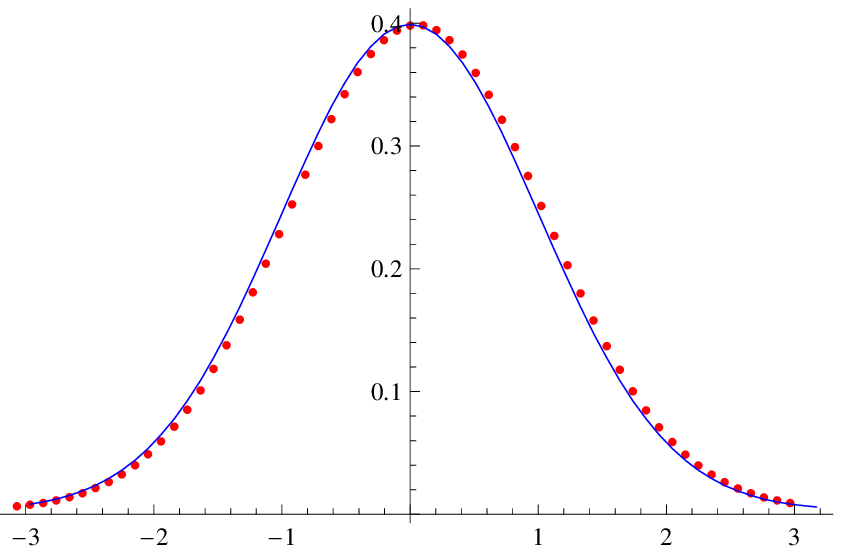}
\end{picture}
\end{minipage}
\hfill
\setlength{\unitlength}{1cm}
\begin{minipage}{5.5cm}
\begin{picture}(5.5,4.0)
\epsfxsize=5.5cm\epsfysize=4cm\epsfbox{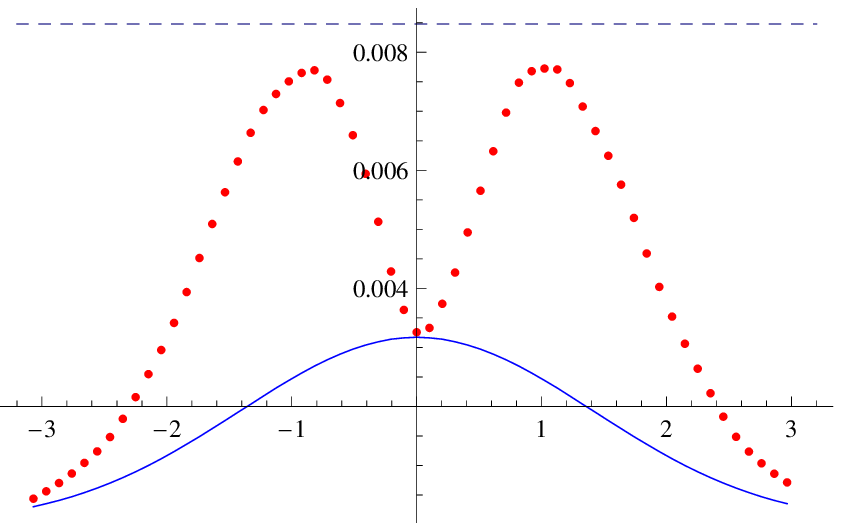}
\end{picture}
\end{minipage}
\caption{\label{add1} \small{The sample means and the theoretical means (left display, a dotted and a solid line, respectively) together with the sample standard deviations and the
two theoretical standard deviations corresponding to Theorems
\ref{thman1} and \ref{thman3} (right display, a dotted, a solid
and a dashed line, respectively). Here the target density $f$ and the density $k$ of a random variable $Z$ are standard normal densities, the noise variance $\sigma_n^2=0.01,$ the sample size $n=100000,$ the bandwidth $h_n=0.05$ and the kernel $w$ is given by \eqref{fankernel}. The number of replications equals $N=500.$ The integral in \eqref{asnrm2} and not its asymptotic expansion was used to evaluate the standard deviation in Theorem
\ref{thman3}.}}
\end{figure}
\begin{figure}[htb]
\setlength{\unitlength}{1cm}
\begin{minipage}{5.5cm}
\begin{picture}(5.5,4.0)
\epsfxsize=5.5cm\epsfysize=4cm\epsfbox{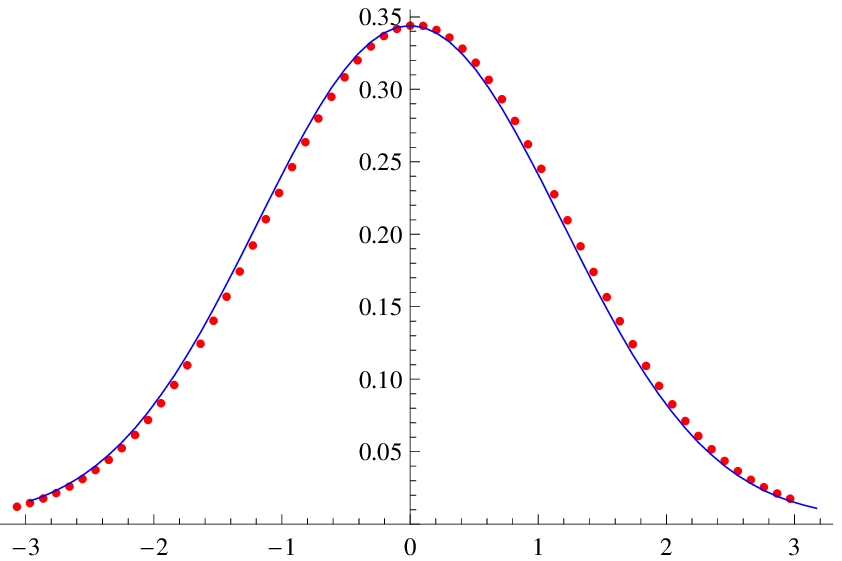}
\end{picture}
\end{minipage}
\hfill
\setlength{\unitlength}{1cm}
\begin{minipage}{5.5cm}
\begin{picture}(5.5,4.0)
\epsfxsize=5.5cm\epsfysize=4cm\epsfbox{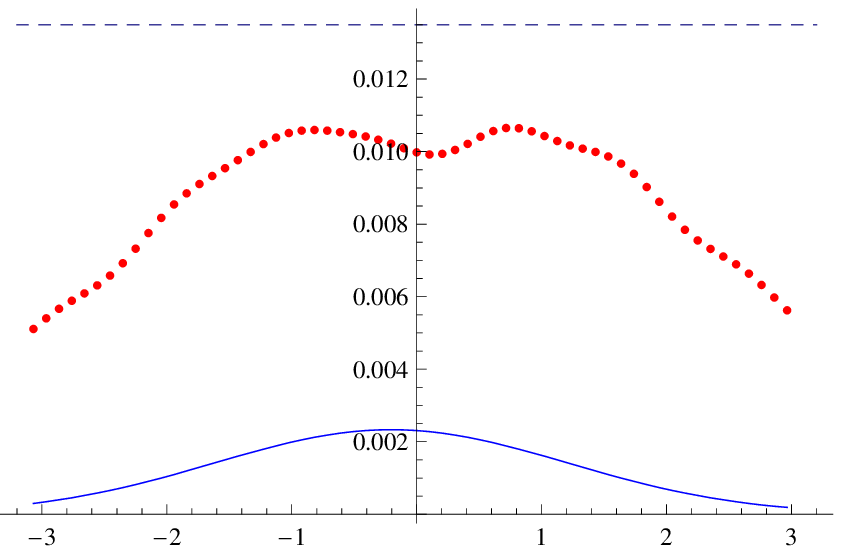}
\end{picture}
\end{minipage}
\caption{\label{add5} \small{The sample means and the theoretical means (left display, a dotted and a solid line, respectively) together with the sample standard deviations and the
two theoretical standard deviations corresponding to Theorems
\ref{thman1} and \ref{thman3} (right display, a dotted, a solid
and a dashed line, respectively). Here the target density $f$ and the density $k$ of a random variable $Z$ are standard normal densities, the noise variance $\sigma_n^2=1,$ the sample size $n=100000,$ the bandwidth $h_n=0.24$ and the kernel $w$ is given by \eqref{fankernel}. The number of replications equals $N=500.$ The integral in \eqref{asnrm2} and not its asymptotic expansion was used to evaluate the standard deviation in Theorem
\ref{thman3}.}}
\end{figure}
\begin{figure}[htb]
\setlength{\unitlength}{1cm}
\begin{minipage}{5.5cm}
\begin{picture}(5.5,4.0)
\epsfxsize=5.5cm\epsfysize=4cm\epsfbox{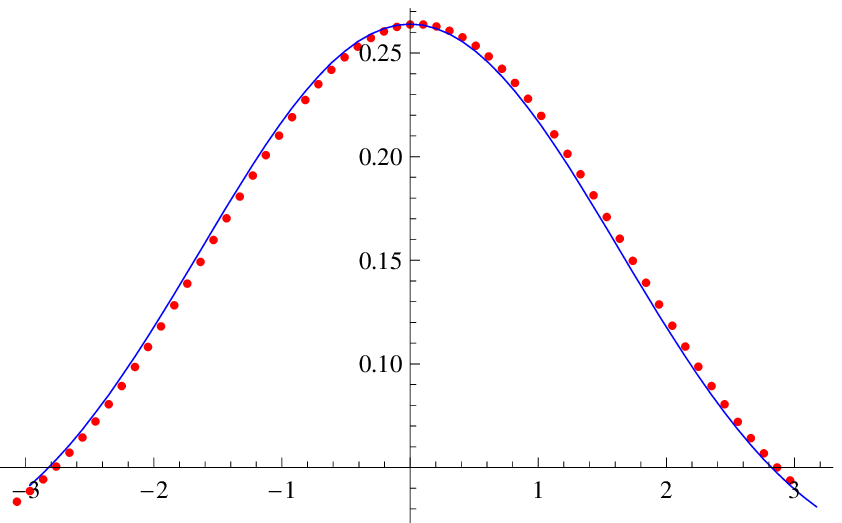}
\end{picture}
\end{minipage}
\hfill
\setlength{\unitlength}{1cm}
\begin{minipage}{5.5cm}
\begin{picture}(5.5,4.0)
\epsfxsize=5.5cm\epsfysize=4cm\epsfbox{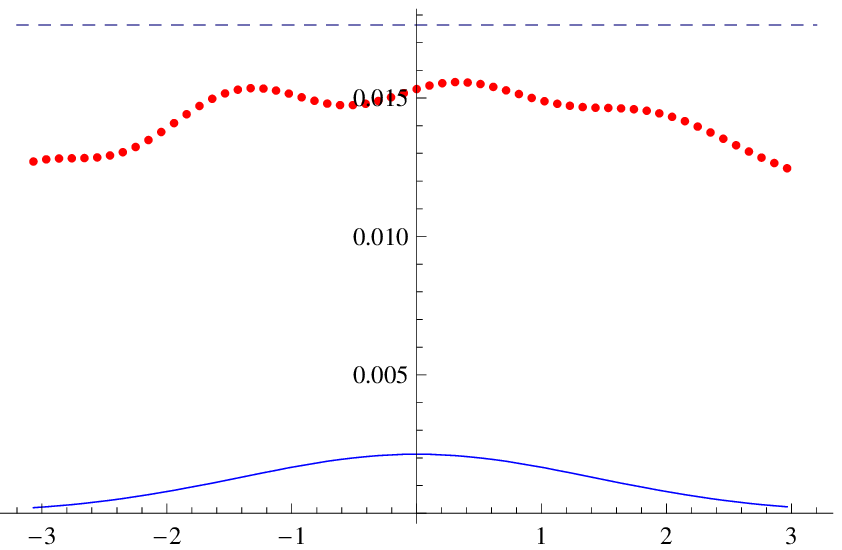}
\end{picture}
\end{minipage}
\caption{\label{add3} \small{The sample means and the theoretical means (left display, a dotted and a solid line, respectively) together with the sample standard deviations and the
two theoretical standard deviations corresponding to Theorems
\ref{thman1} and \ref{thman3} (right display, a dotted, a solid
and a dashed line, respectively). Here the target density $f$ and the density $k$ of a random variable $Z$ are standard normal densities, the noise variance $\sigma_n^2=4,$ the sample size $n=100000,$ the bandwidth $h_n=0.44$ and the kernel $w$ is given by \eqref{fankernel}. The number of replications equals $N=500.$ The integral in \eqref{asnrm2} and not its asymptotic expansion was used to evaluate the standard deviation in Theorem
\ref{thman3}.}}
\end{figure}

In our final example we considered the case when the density $f$ is again a mixture of two normal densities (see above for details). The simulation results for this density are reported in Figure \ref{add2}.
\begin{figure}[htb]
\setlength{\unitlength}{1cm}
\begin{minipage}{5.5cm}
\begin{picture}(5.5,4.0)
\epsfxsize=5.5cm\epsfysize=4cm\epsfbox{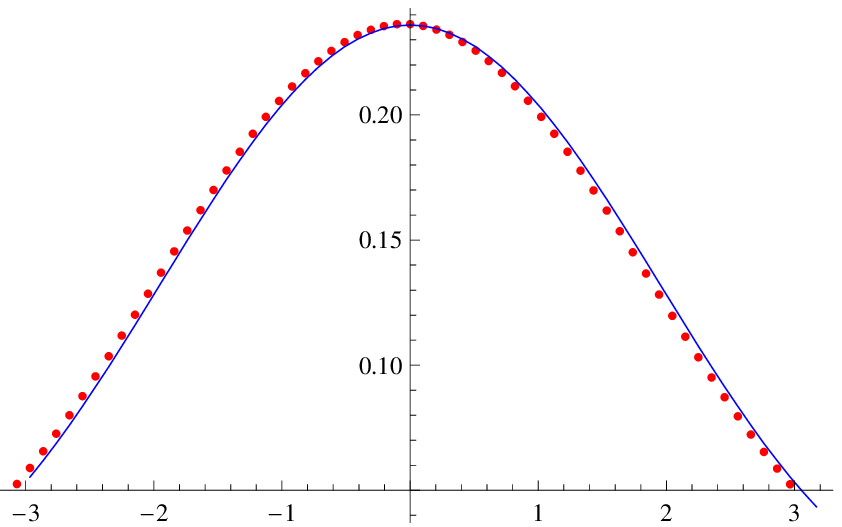}
\end{picture}
\end{minipage}
\hfill
\setlength{\unitlength}{1cm}
\begin{minipage}{5.5cm}
\begin{picture}(5.5,4.0)
\epsfxsize=5.5cm\epsfysize=4cm\epsfbox{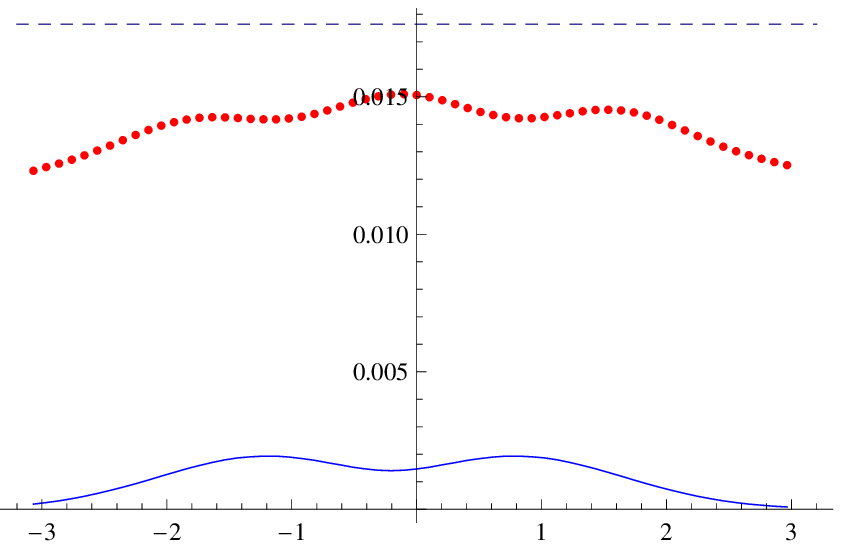}
\end{picture}
\end{minipage}
\caption{\label{add2} \small{The sample means and the
theoretical means (left display, a dotted and a solid line,
respectively) together with the sample standard deviations and the
two theoretical standard deviations corresponding to Theorems
\ref{thman1} and \ref{thman3} (right display, a dotted, a solid
and a dashed line, respectively). Here the target density $f$ is a
mixture of two normal densities with means equal to $-1$ and $1$ and the same variance $0.375,$ the mixing probability is $0.5,$ the density $k$ of a random variable $Z$ is a standard normal density, the noise variance $\sigma_n^2=4,$ the
sample size $n=100000,$ the bandwidth $h_n=0.44$ and the kernel
$w$ is given by \eqref{fankernel}. The number of replications
equals $N=500.$ The integral in \eqref{asnrm2} and not its
asymptotic expansion was used to evaluate the standard deviation
in Theorem \ref{thman3}.}}
\end{figure}
In this last example the bandwidth $h_n=0.44$ was on purpose not selected as a minimiser of $\operatorname{MISE}[f_{nh_n}],$ but was taken to be the same as when estimating a standard normal density (see Figure \ref{add3} above). Notice that the sample standard deviation is almost constant in the neighbourhood of the origin and is of the same magnitude as the one depicted in Figure \ref{add3}. This seems to provide an additional confirmation of the statement of Theorem \ref{thman3}, which says that the limit variance of the estimator $f_{nh_n}$ does not depend on the target density $f.$ Also notice that because of the fact that $h_n$ is relatively large, the smoothed version of $f,$ i.e.\ $f\ast w_{h_n},$ is unimodal instead of being bimodal.

As a preliminary conclusion (we also considered some other examples not reported here), our simulation examples seem to suggest that the asymptotics given by Theorem \ref{thman3} correspond to the less realistic scenarios of high noise level and very large sample size. This provides further motivation for the study of deconvolution problems under the assumption $\sigma_n\rightarrow 0$ as $n\rightarrow\infty.$
\section{Proofs}
\label{proofs}

To prove Theorem \ref{thman1}, we will need the following
modification of Bochner's lemma, see \citet{parzen} for the latter.

\begin{lemma}
\label{fanlemma} Suppose that for all $y$ we have $K_n(y)\rightarrow K(y)$ as $n\rightarrow\infty$ and that $\sup_{n}|K_n(y)|\leq K^{*}(y),$ where the function $K^{*}$ is such that
$\infint K^{*}(y)dy<\infty$ and $\lim_{y\rightarrow\infty}yK^{*}(y)=0.$ Furthermore, suppose that $g_n$ is a sequence of densities, such that
\begin{equation}
\label{suplim}
\lim_{n\rightarrow\infty}\sup_{|u|\leq\epsilon_n}|g_{n}(x-u)-f(x)|\rightarrow
0
\end{equation}
for some sequence $\epsilon_n\downarrow 0,$ such that
$\epsilon_n/h_n\rightarrow\infty$ as $n\rightarrow\infty$ for a sequence $h_n\rightarrow 0.$ Then
\begin{equation}
\label{flemma}
\lim_{n\rightarrow\infty}\frac{1}{h_n}\infint K_n\left(\frac{x-y}{h_n}\right)g_n(y)dy=f(x)\infint K(y)dy.
\end{equation}
\end{lemma}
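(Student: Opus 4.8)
The plan is to run the classical Bochner--Parzen concentration argument, cf.\ \citet{parzen}; the new feature is that now both the ``kernel'' $K_n$ and the density $g_n$ depend on $n$. First I would substitute $u=(x-y)/h_n$ on the left-hand side of \eqref{flemma}; this rewrites it as $\infint K_n(u)g_n(x-h_nu)\,du$, so that the assertion to be proved becomes
\begin{equation*}
\infint K_n(u)\,g_n(x-h_nu)\,du\longrightarrow f(x)\infint K(u)\,du .
\end{equation*}
I would then split the line as $\mathbb R=\{\,|u|\le\delta_n\,\}\cup\{\,|u|>\delta_n\,\}$, with $\delta_n=\epsilon_n/h_n$, which tends to infinity by hypothesis, and treat the two contributions separately.

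On the central part $|u|\le\delta_n$ one has $|h_nu|\le\epsilon_n$, so $g_n(x-h_nu)$ stays within $a_n:=\sup_{|v|\le\epsilon_n}|g_n(x-v)-f(x)|$ of $f(x)$, and $a_n\to0$ by \eqref{suplim}. Writing $g_n(x-h_nu)=f(x)+\bigl(g_n(x-h_nu)-f(x)\bigr)$ and using $|K_n|\le K^{*}$ gives
\begin{equation*}
\int_{|u|\le\delta_n}K_n(u)\,g_n(x-h_nu)\,du=f(x)\int_{|u|\le\delta_n}K_n(u)\,du+O\Bigl(a_n\infint K^{*}(u)\,du\Bigr),
\end{equation*}
and the remainder tends to $0$ because $K^{*}$ is integrable. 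For the leading term, $K_n(u)\mathbf 1_{\{|u|\le\delta_n\}}\to K(u)$ for every $u$ and is dominated by the integrable $K^{*}$, so dominated convergence gives $\int_{|u|\le\delta_n}K_n(u)\,du\to\infint K(u)\,du$; the same domination yields $|K|\le K^{*}$, so the right-hand side of \eqref{flemma} is finite.

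The main obstacle is the tail $|u|>\delta_n$, and it is here that the condition $\lim_{y\to\infty}yK^{*}(y)=0$ --- rather than mere integrability of $K^{*}$ --- is used. I would estimate $\bigl|\int_{|u|>\delta_n}K_n(u)g_n(x-h_nu)\,du\bigr|\le\int_{|u|>\delta_n}K^{*}(u)g_n(x-h_nu)\,du$, change variables back to $y=x-h_nu$ (the domain becomes $\{\,|x-y|>\epsilon_n\,\}$ and a factor $1/h_n$ reappears), and then bound, on that set, $K^{*}\bigl((x-y)/h_n\bigr)\le\dfrac{h_n}{|x-y|}\,\sup_{|t|\ge\delta_n}|t|K^{*}(t)$, where $\sup_{|t|\ge\delta_n}|t|K^{*}(t)\to0$ because $\delta_n\to\infty$. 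Combined with $\infint g_n=1$, this is what drives the tail contribution to zero: the quantity $yK^{*}(y)$ is exactly what must absorb the $1/h_n$ produced by the change of variables, while any mass of $g_n$ that has drifted far from $x$ is harmless since it is controlled by the total mass $1$. In carrying this out I would additionally cut the $y$-integral into $\epsilon_n<|x-y|\le M$ and $|x-y|>M$ for a fixed large $M$: the outer part is killed at once because $M/h_n\to\infty$, while the inner part is the delicate one, where one must play the decay rate of $K^{*}$ off against $\epsilon_n$ and $h_n$ and use $\epsilon_n/h_n\to\infty$ to beat the factor $\epsilon_n^{-1}$ coming from $1/|x-y|$. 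Adding the central and tail estimates gives \eqref{flemma}.
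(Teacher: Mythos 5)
Your plan is in substance the paper's own proof, which follows Lemma 2.1 of \citet{fan2}: after the substitution $u=(x-y)/h_n$, your split at $\delta_n=\epsilon_n/h_n$ is exactly the paper's split of the $y$-integral at $|x-y|\leq\epsilon_n$ versus $|x-y|>\epsilon_n$; the central part is treated identically (the supremum in \eqref{suplim} times $\int K^{*}$, plus dominated convergence to pass from $\int K_n$ to $\int K$), and the tail rests on the same two ingredients, $\int g_n=1$ and the decay of $yK^{*}(y)$. The extra cut of the tail at a fixed $M$ buys you nothing: the region $|x-y|>M$ is already covered by the same bound, and the whole difficulty sits in $\epsilon_n<|x-y|\leq M$.

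The one step you leave open is precisely the step at which the paper is briefest. Undoing the substitution, your tail estimate is $\epsilon_n^{-1}\sup_{|t|\geq\epsilon_n/h_n}|t|K^{*}(t)$ (plus, in the paper's version, $f(x)\int_{|t|\geq\epsilon_n/h_n}K^{*}(t)\,dt$, which in your decomposition is not needed because your leading central term already converges to $f(x)\int K$); the paper writes down this very bound and asserts that it vanishes because $\epsilon_n/h_n\rightarrow\infty$. Your hesitation about the factor $\epsilon_n^{-1}$ is well founded: $\epsilon_n/h_n\rightarrow\infty$ forces the supremum to zero but does not by itself control its product with $\epsilon_n^{-1}$. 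For instance, with $K^{*}(t)$ of the order $|t|^{-1}(\log|t|)^{-2}$ at infinity (integrable, $tK^{*}(t)\rightarrow0$) and $h_n=\epsilon_n^{2}$, the product diverges, and a $g_n$ putting half its mass just outside the window $|x-y|\leq\epsilon_n$ even makes the conclusion \eqref{flemma} fail while \eqref{suplim} holds; so the step cannot be completed from the stated hypotheses alone, and neither your sketch nor the printed proof is airtight at this point. What closes it in practice is the freedom to choose $\epsilon_n$: in the paper's applications $K^{*}(t)\leq C\min(1,t^{-2})$, so the offending term is $O(h_n/\epsilon_n^{2})$, and \eqref{suplim} is verified there for every $\epsilon_n\downarrow0$, so one may take, say, $\epsilon_n=h_n^{1/3}$. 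If you want your write-up to be self-contained, either make that choice explicit or add the condition $\epsilon_n^{-1}\sup_{|t|\geq\epsilon_n/h_n}|t|K^{*}(t)\rightarrow0$ to the statement; apart from this shared soft spot, your argument matches the paper's.
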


\begin{proof}
The proof follows the same lines as the proof of Lemma 2.1 in \citet{fan2}. We have
\begin{multline*}
\left|\frac{1}{h_n}\infint K_n\left(\frac{x-y}{h_n}\right)g_n(y)dy-f(x)\infint K(y)dy\right|\\
\leq \left|\frac{1}{h_n}\infint K_n\left(\frac{x-y}{h_n}\right)g_n(y)dy-f(x)\frac{1}{h_n}\infint K_n\left(\frac{y}{h_n}\right) dy\right|\\
+f(x)\left|\infint K_n(y)dy-\infint K(y)dy\right|=I+II.
\end{multline*}
Notice that $II$ converges to zero by the dominated convergence
theorem. We turn to $I.$ Splitting the integration region into the
sets $\{|u|\leq\epsilon_n\}$ and $\{|u|>\epsilon_n\}$ for some
$\epsilon_n>0,$ we obtain that
\begin{align*}
I &\leq \left|\int_{\{|u|\leq\epsilon_n\}}(g_n(x-u)-f(x))\frac{1}{h_n}K_n\left(\frac{u}{h_n}\right)du\right|\\
& +
\left|\int_{\{|u|>\epsilon_n\}}(g_n(x-u)-f(x))\frac{1}{h_n}K_n\left(\frac{u}{h_n}\right)du\right|\\
& =III+IV.
\end{align*}
For $III$ we have
\begin{equation*}
III\leq \sup_{|u|\leq\epsilon_n}|g_n(x-u)-f(x)|\infint K^{*}(u)du.
\end{equation*}
By \eqref{suplim} the right-hand side of the above expression
vanishes as $n\rightarrow\infty.$ Now we consider $IV.$ Using the
fact that $g_n$ is a density (and hence that it is positive and
integrates to one), we have
\begin{align*}
IV &\leq
\int_{|u|>\epsilon_n}g_n(x-u)\frac{1}{h_n}\left|K^{*}\left(\frac{u}{h_n}\right)\right|du+f(x)\int_{|u|>\epsilon_n} \frac{1}{h_n}K^{*}\left(\frac{u}{h_n}\right)du\\
&\leq \frac{1}{\epsilon}\sup_{|y|>\epsilon_n/h_n}
|yK^{*}(y)|+f(x)\int_{|y|>\epsilon_n/h_n}K^{*}(y)dy.
\end{align*}
Notice that the right-hand side in the last inequality vanishes as
$n\rightarrow\infty,$ because we assumed that
$\epsilon_n/h_n\rightarrow\infty.$ Combination of these results
yields the statement of the lemma.
\end{proof}

\begin{proof}[Proof of Theorem \ref{thman1}] The main steps of the proof are similar to those on pp.\ 1069--1070 of \citet{parzen}. Let $\delta$ be an arbitrary positive number. Denote
\begin{equation*}
V_{nj}=\frac{1}{h_n}w_{r_n}\left(\frac{x-X_j}{h_n}\right),
\end{equation*}
where $w_{r_n}$ is defined by \eqref{wh} and notice that \eqref{fnh2} is an average of the i.i.d.\ random variables $V_{n1},\ldots,V_{nn}.$ We have
\begin{equation}
\label{an1}
\var[V_{nj}]=\ex[V_{nj}^2]-(\ex[V_{nj}])^2.
\end{equation}
Observe that
\begin{equation}
\label{exvnjsq} \ex[V_{nj}^2]=\infint
\frac{1}{h_n^2}\left|w_{r_n}\left(\frac{x-y}{h_n}\right)\right|^2g_n(y)dy,
\end{equation}
where $g_n$ denotes the density of $X_j.$ Integration by parts
gives
\begin{equation*}
w_{r_n}(u)=\frac{1}{iu}\int_{-1}^{1}e^{-itu}\left(\frac{\phi_w(t)}{\phi_k(r_nt)}\right)^{'}dt,
\end{equation*}
and hence
\begin{equation*}
|w_{r_n}(u)|\leq\frac{1}{|u|}\int_{-1}^{1}\left|\frac{\phi_w^{'}(t)\phi_k(r_nt)-r_n\phi_w(t)\phi_k^{'}(r_nt)}{(\phi_k(r_nt))^2}\right|dt
\end{equation*}
Furthermore, $\lim_{n\rightarrow\infty} r_n=r<\infty$ implies that there exists
a positive number $a,$ such that $\sup r_n\leq a<\infty.$ Notice
that
\begin{equation*}
\inf_{t\in[-1,1]}|\phi_k(r_nt)|=\inf_{s\in[-r_n,r_n]}|\phi_k(s)|\geq\inf_{s\in[-a,a]}|\phi_k(s)|.
\end{equation*}
Therefore
\begin{equation}
\label{wr1}
|w_{r_n}(u)|\leq c_{ak}\frac{1}{|u|}\int_{-1}^{1}(|\phi_w^{'}(t)|+|\phi_w(t)|)dt,
\end{equation}
where the constant $c_{ak}$ does not depend on $n,$ but only on
the density $k$ and the number $a.$ On the other hand
\begin{equation}
\label{wr2} |w_{r_n}(u)|\leq
\frac{1}{2\pi}\int_{-1}^{1}\frac{|\phi_w(t)|}{\inf_{s\in[-a,a]}|\phi_k(s)|}dt<\infty.
\end{equation}
Combining \eqref{wr1} and \eqref{wr2}, we obtain that
\begin{equation}
\label{wr3}
|w_{r_n}(u)|\leq\min\left(C_1,\frac{C_2}{|u|}\right),
\end{equation}
where the constants $C_1$ and $C_2$ do not depend on $n.$ Observe
that the function on the right-hand side of \eqref{wr3} is square
integrable. Next, we have
\begin{equation*}
\sup_{|u|\leq\epsilon_n}|g_n(x-u)-f(x)|\leq
\sup_{|u|\leq\epsilon_n}|g_n(x-u)-g_n(x)|+|g_n(x)-f(x)|=I+II.
\end{equation*}
for an arbitrary $\epsilon_n>0.$ By the Fourier inversion argument
for $I$ we obtain
\begin{align*}
|I|&\leq \left|\sup_{|u|\leq\epsilon_n}\frac{1}{2\pi}\infint
e^{-itx}\phi_f(t)\phi_k(r_n t)(e^{itu}-1)dt \right|\\
&\leq \frac{1}{2\pi}\infint
|\phi_f(t)|\sup_{|u|\leq\epsilon_n}|e^{itu}-1|dt.
\end{align*}
Notice that $\sup_{|u|\leq\epsilon_n}|e^{itu}-1|\leq \epsilon_n
|t|\rightarrow 0$ for every fixed $t.$ Furthermore,
$\sup_{|u|\leq\epsilon_n}|e^{itu}-1|\leq 2$ and $\phi_f$ is
integrable. Let $\epsilon_n\downarrow 0$ as $n\rightarrow \infty.$
Then by the dominated convergence theorem $I$ will vanish as
$n\rightarrow \infty.$ A similar Fourier inversion argument and
another application of the dominated convergence theorem shows
that $II$ also vanishes as $n\rightarrow\infty.$ Thus
\eqref{suplim} is satisfied. Now \eqref{exvnjsq}, \eqref{wr3} and
Lemma \ref{fanlemma} imply that
\begin{equation}
\label{an2} \ex[V_{nj}^2]\sim \frac{1}{h_n}f(x)\infint|w_r(u)|^2du.
\end{equation}
Furthermore, by Fubini's theorem
\begin{equation}
\begin{split}
\label{an3}
\ex[V_{nj}]&=\frac{1}{h_n}\frac{1}{2\pi}\infint \exp\left(\frac{-i{t}x}{h_n}\right)\ex\left[\exp\left(\frac{i{t}X_j}{h_n}\right)\right]
\frac{\phi_w(t)}{\phi_k(r_nt)}dt\\
&=\infint
\exp\left(-\frac{i{t}x}{h_n}\right)\ex\left[\exp\left(\frac{i{t}Y_j}{h_n}\right)\right]\ex\left[\exp\left(\frac{i{t}\sigma_n
Z_j}{h_n}\right)\right]
\frac{\phi_w(t)}{\phi_k(r_nt)}dt\\
&=\frac{1}{h_n}\frac{1}{2\pi}\infint \exp\left(-\frac{i{t}x}{h_n}\right)\phi_f\left(\frac{t}{h_n}\right)\phi_w(t)dt.
\end{split}
\end{equation}
The last expression is bounded uniformly in $h_n$ due to
Assumption A (i) and (iii), which can be seen by a change of the
integration variable $t/h_n=s.$ Moreover, using \eqref{exvnjsq},
\eqref{wr2} and \eqref{an2}, we have that
\begin{equation}
\label{an4} \ex[|V_{nj}^{2+\delta}|]=\infint
\frac{1}{h^{2+\delta}}\left|w_{r_n}\left(\frac{x-y}{h}\right)\right|^{2+\delta}g_n(y)dy
\end{equation}
is of order $h_n^{-1-\delta}.$ Combination of the above results now yields
\begin{equation}
\frac{\ex[|V_{nj}-\ex[V_{nj}]|^{2+\delta}]}{n^{\delta/2}(\var[V_{nj}])^{1+\delta/2}}\rightarrow 0
\end{equation}
as $h_n\rightarrow 0,nh_n\rightarrow\infty.$ Therefore
$f_{nh_n}(x)$ satisfies Lyapunov's condition for asymptotic
normality in the triangular array scheme, see Theorem 7.3 in
\citet{billingsley}, and hence it is asymptotically normal, i.e.\
\begin{equation*}
\frac{f_{nh_n}(x)-\ex[f_{nh_n}(x)]}{\sqrt{\var[f_{nh_n}(x)]}}\convd {\mathcal N}(0\, , 1).
\end{equation*}
Formula \eqref{an0} is then immediate from this fact, formulae
\eqref{an1}, \eqref{an2}, \eqref{an3} and Slutsky's lemma, see
Corollary 2 on p.~31 of \citet{billingsley}.
\end{proof}

\begin{proof}[Proof of Theorem \ref{thman3}] The proof follows the same line of thought as the proof of Theorem 1 in \citet{vanes1}. For an arbitrary $0<\epsilon<1$ we have
\begin{align}
f_{nh_n}(x)
&=
\frac{1}{2\pi nh_n}\sum_{j=1}^n\int_{-\epsilon}^\epsilon
\exp\Big(is\Big(\frac{X_j-x}{h_n}\Big)\Big)\frac{\phi_w(s)}{\phi_k(s/\rho_n)}ds
\label{epsint}\\
&+
\frac{1}{2\pi nh_n}\sum_{j=1}^n
\Big(
\int_{-1}^{-\epsilon}+\int_{\epsilon}^1\Big)
\exp\Big(is\Big(\frac{X_j-x}{h_n}\Big)\Big)\frac{\phi_w(s)}{\phi_k(s/\rho_n)}ds
\label{1int}.
\end{align}
The integral in \eqref{epsint} is real-valued, which can be seen
by taking its complex conjugate. Using Assumption B (i), the
variance of \eqref{epsint} can be bounded as follows:
\begin{align*}
\var  &\left[ \frac{1}{2\pi nh_n}\sum_{j=1}^n \int_{-\epsilon}^\epsilon
\exp\left(is\left(\frac{X_j-x}{h_n}\right)\right)\frac{\phi_w(s)}{\phi_k(s/\rho_n)}ds \right] \\
& \leq
\frac{1}{4\pi^2 nh_n^2}\ex \left[\left( \int_{-\epsilon}^\epsilon
\exp\Big(is\Big(\frac{X_j-x}{h_n}\Big)\Big)\frac{\phi_w(s)}{\phi_k(s/\rho_n)}ds
\right)^2\right]\\
&\leq \frac{1}{4\pi^2 nh_n^2 }\left( \int_{-\epsilon}^\epsilon
\frac{1}{|\phi_k(s/\rho_n)|}ds \right)^2 \\
& \leq \frac{1}{4\pi^2 nh_n^2 }
(2\epsilon)^2 \left(\frac{1}{\inf_{-\epsilon\leq s\leq \epsilon}|\phi_k(s/\rho_n)|}
\right)^2\\
&=O\left( \frac{1}{\pi^2 }\frac{1}{n}\frac{1}{\sigma_n^2}\,
\left(\frac{\epsilon}{\rho_n}\right)^{2-2\lambda_0} \exp\left(
\frac{2\epsilon^{\lambda}}{\mu\rho_n^{\lambda}}\right)\right).
\end{align*}
Hence the contribution of (\ref{epsint}) minus its expectation is
of order
$$
O_P\left(\frac{1}{\sigma_n}\frac{1}{\sqrt{n}}\left(\frac{\epsilon}{\rho_n}\right)^{1-\lambda_0} \exp\left(
\frac{\epsilon^{\lambda}}{\mu\rho_n^{\lambda}}\right)\right).
$$
By comparing this to the normalising constant in
\eqref{supersmooth}, by Slutsky's lemma we see that \eqref{epsint}
can be neglected when considering the asymptotic normality of
$f_{nh_n}(x).$

The term \eqref{1int} can be written as
\begin{align}
&\frac{1}{2\pi nh_nC}\sum_{j=1}^n
\Big(\int_{-1}^{-\epsilon}+\int_{\epsilon}^1\Big)
\exp\Big(is\Big(\frac{X_j-x}{h_n}\Big)\Big)\phi_w(s)\left(\frac{|s|}{\rho_n}\right)^{-\lambda_0}
\exp\left(\frac{|s|^{\lambda}}{\mu \rho_n^{\lambda}}\right)ds\label{versc}\\
&+
\frac{1}{2\pi nh_n}\sum_{j=1}^n
\Big(\int_{-1}^{-\epsilon}+\int_{\epsilon}^1\Big)
\exp\Big(is\Big(\frac{X_j-x}{h_n}\Big)\Big)\phi_w(s)\nonumber\\
&\times\Big(\frac{1}{\phi_k(s/\rho_n)}-\frac{1}{C}\Big(\frac{|s|}{\rho_n}\Big)^{-\lambda_0}
\exp\left(\frac{|s|^{\lambda}}{\mu \rho_n^{\lambda}}\right)\Big)
ds.\label{verschil}
\end{align}
Observe that both \eqref{versc} and \eqref{verschil} are real. Expression \eqref{versc} equals
\begin{equation}
\label{realint} \frac{1}{\pi n\sigma_n
C}\,\rho_n^{\lambda_0-1}\sum_{j=1}^n \int_\epsilon^1
\cos\Big(s\Big(\frac{X_j-x}{h_n}\Big)\Big)\phi_w(s) s^{-\lambda_0}
\exp\left(\frac{s^{\lambda}}{\mu \rho_n^{\lambda}}\right)ds.
\end{equation}
By formula (21) of \citet{vanes1}
\begin{equation}
\cos\Big(s\Big(\frac{X_j-x}{h_n}\Big)\Big)=
\cos\Big(\frac{X_j-x}{h_n}\Big) +R_{n,j}(s),\label{trig1}
\end{equation}
where $R_{n,j}(s)$ is a remainder term satisfying
\begin{equation}\label{Rnbound1}
|R_{n,j}|\leq ( |x| +|X_j|)\Big(\frac{1-s}{h_n}\Big),
\end{equation}
whence by Lemma 5 of \citet{vanes1} the expression \eqref{realint}
equals
\begin{align*}
&\frac{1}{\pi\sigma_n C}\rho_n^{\lambda_0-1}\int_\epsilon^1 \phi_w(s)
s^{-\lambda_0} \exp\left(\frac{s^\lambda}{\mu \rho_n^\lambda}\right)ds \frac{1}{n}\sum_{j=1}^n \cos\Big(\frac{X_j-x}{h_n}\Big)+
\frac{1}{n}\sum_{j=1}^n \tilde{R}_{n,j}\\
&= \frac{1}{\pi\sigma_n C}A(\Gamma(\alpha+1)+o(1))\left(\frac{\mu}{\lambda}\right)^{1+\alpha}
\rho_n^{\lambda(1+\alpha)+\lambda_0-1}\zeta(\rho_n) \frac{1}{n}\sum_{j=1}^n \cos\Big(\frac{X_j-x}{h_n}\Big)\\
&+ \frac{1}{n}\sum_{j=1}^n \tilde{R}_{n,j},
\end{align*}
where
$$
\tilde{R}_{n,j}=\frac{1}{\pi\sigma_n C}\rho_n^{\lambda_0-1}\int_\epsilon^1
R_{n,j}(s) \phi_w(s)
s^{-\lambda_0} \exp\left(\frac{s^{\lambda}}{\mu \rho_n^\lambda}\right)ds.
$$
By \eqref{Rnbound1} and Lemma 5 of \citet{vanes1} the latter
expression can be bounded as
\begin{eqnarray*}
\lefteqn{
|\tilde{R}_{n,j}|\leq \frac{1}{\pi\sigma_n C}(|x|+|X_j|)\rho_n^{\lambda_0-1}
\int_\epsilon^1
\Big(\frac{1-s}{h_n}\Big)\phi_w(s)
s^{-\lambda_0} \exp\left(\frac{s^{\lambda}}{\mu \rho_n^\lambda}\right)ds}\\
&=&
\frac{1}{\pi\sigma_n h_n C}A\left(\frac{\mu}{\lambda}\right)^{\alpha+2}(\Gamma(\alpha+2)+o(1))
\rho_n^{\lambda(2+\alpha)+\lambda_0-1}\zeta(\rho_n)
(|x|+|X_j|).
\end{eqnarray*}
Hence
\begin{equation*}
\var [\tilde{R}_{n,j}]\leq \ex
[\tilde{R}_{n,j}^2]=O\left(\frac{1}{\sigma_n^2 h_n^2}
\rho_n^{2(\lambda(2+\alpha)+\lambda_0-1)}(\zeta(\rho_n))^2\right).
\end{equation*}
Here we used the fact that $\ex[Y^2_j]+\ex[Z^2_j]<\infty$ together with the
fact that being convergent, the sequence $\sigma_n$ is bounded,
which implies that $\ex[X_j^2]$ is bounded uniformly in $n.$ By
Chebyshev's inequality it follows that
\begin{equation}
\label{rmn}
\frac{1}{n} \sum_{j=1}^n (\tilde{R}_{n,j}-\ex\tilde{R}_{n,j})=
O_P\left(\frac{1}{\sigma_n h_n} \frac{\rho_n^{\lambda(2+\alpha)+\lambda_0-1}\zeta(\rho_n) }{\sqrt{n}}\right).
\end{equation}
After multiplication of this term by the normalising factor from
\eqref{supersmooth} we obtain that the resulting expression is of
order $\rho_n^\lambda(\sigma_n
h_n)^{-1}=h_n^{\lambda-1}\sigma_n^{\lambda}.$ Assumption B (v) and
Slutsky's lemma then imply that the remainder term \eqref{rmn} can
be neglected when considering the asymptotic normality of
$f_{nh_n}(x).$

The variance of \eqref{verschil} can be bounded by
\begin{equation*}
\frac{1}{4\pi^2 nh_n^2C^2}
\Big(\Big( \int_{-1}^{-\epsilon} +\int_{\epsilon}^1\Big)
|\phi_w(s)|
\left(\frac{|s|}{\rho_n}\right)^{-\lambda_0}
\exp\left(\frac{|s|^\lambda}{\mu \rho_n^\lambda}\right)|u(s/\rho_n)|ds\Big)^2,
\end{equation*}
where the function $u$ is given by
\begin{equation}\label{ufunction}
u(y)=
\frac{C|y|^{\lambda_0} \exp(-|y|^\lambda\mu^{-1})}{\phi_k(y)}-1.
\end{equation}
This function is bounded on $\mathbb{R}\negmedspace\setminus
\negmedspace (-\delta,\delta),$ where $\delta$ is an arbitrary
positive number. It follows that $u(s/\rho_n)$ is also bounded and
tends to zero for all fixed $s$ with $|s| \geq \epsilon$ as
$\rho_n \rightarrow 0.$  Hence the variance of \eqref{verschil} is
of smaller order compared to the variance of \eqref{versc}, which
can be shown by the dominated convergence theorem via an argument
similar to the one in the proof of Lemma 5 of \citet{vanes1}.
Therefore by Slutsky's lemma \eqref{verschil} can be neglected
when considering asymptotic normality of \eqref{supersmooth}.

Combination of the above observations yields that it suffices to study
\begin{equation}
\label{decomp}
\frac{A}{\pi C}\left(\frac{\mu}{\lambda}\right)^{1+\alpha}
(\Gamma(\alpha+1)+o(1))U_{nh_n}(x),
\end{equation}
where
\begin{equation*}
U_{nh_n}(x)=\frac{1}{\sqrt{n}}\sum_{j=1}^n \left(\cos\left(\frac{X_j-x}{h_n}\right)
-\ex \left[ \cos\left(\frac{X_j-x}{h_n}\right)\right]\right).
\end{equation*}
Observe that
\begin{equation*}
\frac{X_j-x}{h_n}=\frac{Y_j-x}{h_n}+\frac{\sigma_n}{h_n}Z_j=\frac{Y_j-x}{h_n}+\frac{Z_j}{\rho_n}
\end{equation*}
and that by the same arguments as in the proof of Lemma 6 in
\citet{vanes1}, both $(Y_j-x)/h_n\negthinspace \mod 2\pi$ and
$Z/\rho_n\negthinspace\mod 2\pi$ converge in distribution to a
random variable with a uniform distribution on $[0,2\pi].$
Furthermore, these two random variables are independent. Now
notice that for two independent random variables $W_1$ and $W_2$
the sum $W_1+W_2\negthinspace\mod 2\pi$ equals in distribution
$(W_1\mod 2\negthinspace\pi+W_2\negthinspace\mod
2\pi)\negthinspace\mod 2\pi.$ Moreover, if $W_1$ and $W_2$ are
uniformly distributed on $[0,2\pi],$ then also
$W_1+W_2\negthinspace\mod 2\pi$ is uniformly distributed on
$[0,2\pi],$ see \citet{scheinok}. Using these two facts, by
exactly the same arguments as in the proof of Lemma 6 of
\citet{vanes1} we finally obtain that $U_{nh_n}(x)\convd {\mathcal
N}\left(0,1/2\right).$ The latter in conjunction with
\eqref{decomp} entails \eqref{supersmooth}.
\end{proof}

\begin{proof}[Proof of Theorem \ref{thman2}]
The proof employs an approach similar to the proof of Theorem 2.1 of \citet{fan2}. We have
\begin{equation*}
\ex[V_{nj}^2]=\infint
\frac{1}{h_n^2}\left|w_{{\rho}_n}\left(\frac{x-y}{h_n}\right)\right|^2g_n(y)dy.
\end{equation*}
By equation (3.1) of \citet{fan2} (with $h_n$ replaced by $\rho_n$) we have
\begin{equation*}
\left|\frac{\rho_n^{\beta}\phi_w(t)}{\phi_k(t/\rho_n)}\right|\leq w_0(t),
\end{equation*}
where $w_0$ is a positive integrable function. Hence by the dominated convergence theorem
\begin{equation*}
\rho_n^{\beta}w_{\rho_n}(y)\rightarrow \frac{1}{2\pi C}\int_{-1}^{1}e^{-itx}t^{\beta}\phi_w(t)dt.
\end{equation*}
Furthermore, again by equation (3.1) of \citet{fan2} we have $|\rho_n^{\beta}w_{\rho_n}(y)|\leq C_2$ for some constant $C_2$ independent of $n$ and $y,$ while equation (2.7) of \citet{fan2} implies that $|\rho_n^{\beta}w_{\rho_n}(y)|\leq C_1/|y|.$ Combination of these two bounds gives
\begin{equation}
\label{rhob1}
|\rho_n^{\beta}w_{\rho_n}(y)|\leq \min\left(\frac{C_1}{|y|},C_2\right).
\end{equation}
Since the fact that $g_n$ satisfies \eqref{suplim} can be shown
exactly as in the proof of Theorem \ref{thman1}, by Lemma
\ref{fanlemma} we then obtain that
\begin{equation}
\label{an13}
\begin{split}
\ex[V_{nj}^2]
&\sim \frac{f(x)}{h_n\rho_n^{2\beta}}\infint\left[\frac{1}{2\pi C}\int_{-1}^{1}e^{-ity}t^{\beta}\phi_w(t)dt\right]^2dy\\
&=\frac{1}{h_n\rho_n^{2\beta}}\frac{f(x)}{2\pi C^2}\int_{-1}^{1}|t|^{2\beta}|\phi_w(t)|^2dt,
\end{split}
\end{equation}
where the last equality follows from Parseval's identity.
Furthermore, by Fubini's theorem and the dominated convergence
theorem we have
\begin{equation}
\label{an14}
\begin{split}
\ex[V_{nj}]&=\frac{1}{h_n}\frac{1}{2\pi}\infint
\exp\left(-\frac{i{t}x}{h_n}\right)\ex\left[\exp\left(\frac{i{t}X_j}{h_n}\right)\right]
\frac{\phi_w(t)}{\phi_k(t/\rho_n)}dt\\
&=\frac{1}{h_n}\frac{1}{2\pi}\infint e^{-i{t}x/h}\phi_f\left(\frac{t}{h_n}\right)\phi_w(t)dt\\
&=\frac{1}{2\pi}\infint e^{-i{t}x}\phi_f({t})\phi_w(h_nt)dt\\
&\rightarrow f(x).
\end{split}
\end{equation}
The dominated convergence theorem is applicable because of
Assumption B (i) and (iii). Finally, let us consider $\ex[|V_{nj}^{2+\delta}|].$ Writing
\begin{equation}
\label{an15} \ex[|V_{nj}|^{2+\delta}]=\infint
\frac{1}{h_n^{2+\delta}}\left|w_{{\rho}_n}\left(\frac{x-y}{h_n}\right)\right|^{2+\delta}g_n(y)dy,
\end{equation}
and using \eqref{rhob1} and Lemma 1, we obtain that
\begin{equation*}
\ex[|V_{nj}|^{2+\delta}]=O(h_n^{-1-\delta}\rho_n^{-\beta(2+\delta)}).
\end{equation*}
Combination of \eqref{an13}, \eqref{an14} and \eqref{an15} yields
that Lyapunov's condition is fulfilled and hence that
$f_{nh_n}(x)$ is asymptotically normal. Formula
\eqref{ordinarysmooth} then follows from \eqref{an13} and
\eqref{an14}. This completes the proof.
\end{proof}


\begin{thebibliography}{34}

\bibitem[Billingsley(1968)]{billingsley} Billingsley, P., 1968.\ Convergence of Probability
Measures.\ Wiley, New York.

\bibitem[Carroll and Hall(1988)]{carroll1} Carroll, R.\ J. and Hall P., 1988.\ Optimal rates of convergence for deconvolving a density.\ J.\ Amer. Stat.\ Assoc.\ 83, 1184--1186.

\bibitem[Carroll et al.(2006)]{carroll3} Carroll, R.\ J., Ruppert, D., Stefanski, L.\ A.\ and Crainiceanu, C.\ M., 2006.\ Measurement Error in Nonlinear Models.\ Chapman \& Hall/CRC, Boca Raton, 2nd edition.

\bibitem[Delaigle(2008)]{delaigle0} Delaigle, A., 2008.\ An alternative view of the deconvolution problem.\ Statist.\
Sinica.\ 18, 1025--1045.

\bibitem[Delaigle and Hall (2006)]{delaigle1} Delaigle, A.\ and
Hall, P., 2008. On optimal kernel choice for deconvolution.\  Statist.\
Probab.\ Lett.\ 76, 1594--1602.

\bibitem[van Es and Gugushvili(2008)]{gugu1} van Es, B. and Gugushvili, S, 2008. Some thoughts on the asymptotics of the deconvolution kernel density estimator. arXiv:0801.2600 [stat.ME].

\bibitem[van Es et al.(2008)]{gugu2} van Es, B., Gugushvili, S.\ and Spreij, P., 2008.\ Deconvolution for an atomic distribution.\ Electron.\ J.\ Stat.\ 2, 265--297.

\bibitem[van Es and Uh(2004)]{vanes2} van Es, A.\ J.\ and Uh, H.-W., 2004.\ Asymptotic normality of nonparametric kernel-type deconvolution density estimators: crossing the Cauchy boundary.\ J.\ Nonparametr.\ Stat.\ 16, 261--277.

\bibitem[van Es and Uh(2005)]{vanes1} van Es, B.\ and Uh, H.-W., 2005.\ Asymptotic normality of kernel type deconvolution estimators.\ Scand.\ J.\ Statist.\ 32, 467--483.

\bibitem[Fan(1991a)]{fan1} Fan, J., 1991a.\ On the optimal rates of
convergence for nonparametric deconvolution problems.\ Ann.\
Statist.\ 19, 1257--1272.

\bibitem[Fan(1991b)]{fan2} Fan, J., 1991b.\ Asymptotic normality for deconvolution kernel density estimators.\ Sankhy\={a} Ser.\ A 53, 97--110.

\bibitem[Fan(1992)]{fan3} Fan, J., 1992.\ Deconvolution for supersmooth
distributions.\ Canad.\ J.\ Statist.\ 20, 155--169.

\bibitem[Jones and Lotwick(1984)]{lotwick} Jones, M.C.\ and Lotwick, H.W., 1984. Remark AS R50: a remark on algorithm AS 176. Kernal density
estimation using the fast Fourier transform, Appl.\ Stat.\ 33, 120--122.

\bibitem[Parzen(1962)]{parzen} Parzen, E., 1962.\ On estimation of a probability density function and mode.\ Ann.\ Math.\ Statist.\ 33, 1065--1076.

\bibitem[Scheinok(1965)]{scheinok} Scheinok, R., 1965.\ The distribution functions of random variables in arithmetic domains modulo $a$.\ Amer.\ Math.\ Monthly 72, 128--134.

\bibitem[Silverman(1982)]{silverman} Silverman, B.W., 1982.\ Kernel density
estimation using the fast Fourier transform. Algorithm AS 176. Appl. Stat. 31, 93--99.

\bibitem[Stefanski and Carroll(1990)]{carroll2} Stefanski, L.\ and Carroll, R.\ J., 1990.\ Deconvoluting kernel density
estimators.\
Statistics 2, 169--184.

\bibitem[Wand(1998)]{wand} Wand, M.\ P., 1998.\ Finite sample performance of deconvolving density estimators.\ Statist.\ Probab.\ Lett.\ 37, 131--139.

\bibitem[Wasserman(2007)]{wasserman} Wasserman, L., 2007.\ All of Nonparametric Statistics.\ Springer, New York.

\end{thebibliography}
\end{document}